\newtheorem{theorem}{Theorem}[section]
\newtheorem{lemma}[theorem]{Lemma}
\newtheorem{remark}[theorem]{Remark}
\newtheorem{definition}[theorem]{Definition}
\newtheorem{example}[theorem]{Example}
\newtheorem{proposition}[theorem]{Proposition}
\def\u{\mathfrak{A}}
\def\hbu{\mathcal H_{b\u}}
\def\PP{\mathcal P}
\def\C{\mathbb{C}}
\def\zC{\mathbb{C}}
\def\N{\mathbb{N}}
\def\T{\mathbb{T}}
\def\R{\mathbb{R}}
\def\p{\mathcal{P}}
\def\v{\overset\vee}
\def\l{\mathcal{L}}
\def\zH{\mathcal H}
\begin{document}

\title[Strongly mixing convolution operators on
Fr\'echet spaces]{Strongly mixing convolution operators on
Fr\'echet spaces of holomorphic functions}

\author{Santiago Muro, Dami\'an Pinasco, Mart\'{i}n Savransky}

\thanks{Partially supported by PIP 2010-2012 GI 11220090100624, PICT 2011-1456, UBACyT 20020100100746, ANPCyT PICT 11-0738 and CONICET}

\address{Santiago Muro
\hfill\break\indent Departamento de Matem\'{a}tica - Pab I,
Facultad de Cs. Exactas y Naturales, Universidad de Buenos Aires,
(1428), Ciudad Aut\'onoma de Buenos Aires, Argentina and CONICET} \email{{\tt smuro@dm.uba.ar}}

\address{Dami\'an Pinasco
\hfill\break\indent Departamento de Matem\'aticas y Estad\'{\i}stica,
Universidad Torcuato di Tella, Av. F. Alcorta 7350, (1428), Ciudad Aut\'onoma de Buenos Aires, ARGENTINA and
CONICET}
\email{{\tt dpinasco@utdt.edu}}

\address{Mart\'{i}n Savransky
\hfill\break\indent Departamento de Matem\'{a}tica - Pab I,
Facultad de Cs. Exactas y Naturales, Universidad de Buenos Aires,
(1428),Ciudad Aut\'onoma de Buenos Aires, Argentina and CONICET} \email{{\tt msavran@dm.uba.ar}}

\subjclass[2010]{47A16, 30D15, 47B38, 46G20}

\begin{abstract}
A theorem of Godefroy and Shapiro states that non-trivial convolution operators on the space of entire functions on $\mathbb{C}^n$ are hypercyclic. Moreover, it was shown by Bonilla and Grosse-Erdmann that they have frequently hypercyclic functions of exponential growth. On the other hand, in the infinite dimensional setting, the Godefroy-Shapiro theorem has been extended to several spaces of entire functions defined on Banach spaces. We prove that on all these spaces, non-trivial convolution operators are strongly mixing with respect to a gaussian probability measure of full support. For the proof we combine the results previously mentioned and we use techniques recently developed by Bayart and Matheron.  We also obtain the existence of frequently hypercyclic entire functions of exponential growth.

\end{abstract}

\keywords{convolution operators, frequently hypercyclic operators, holomorphy types, strongly mixing operators.}

\maketitle


\section*{Introduction}

\bigskip

If $T$ is a continuous linear operator acting on some topological vector space $X$, the $T$-orbit of a vector $x\in X$ is the set $Orb(x,T):=\{x, Tx, T^2x,\dots\}$. The operator $T$ is said to be {\it hypercyclic } if there exist some vector $x\in X$, called {\it hypercyclic vector}, whose $T$-orbit is dense in $X$. Other forms of hypercyclicity where defined and studied in the literature. Specially, $T$ is {\it frequently hypercyclic} if there exist a vector $x\in X$, called {\it frequently hypercyclic vector}, whose $T$-orbit visits each non-empty open set along a set of integers having positive lower density.

Several criteria to determine if an operator is hypercyclic have been studied. It is known that a large supply of eigenvectors implies hypercyclicity. In particular,  if the eigenvectors associated to eigenvalues of modulus less than 1 and the eigenvectors associated to eigenvalues of modulus greater than 1 span dense subspaces, then the operator is hypercyclic. This result is due to Godefroy and Shapiro \cite{GodSha91}. They also prove there that non-trivial convolution operators, i.e. operators that commute with translations and which are not multiples of the identity, on the space of entire functions on $\C^n$ are hypercyclic.
This result has also been extended to some spaces of entire functions on infinite dimensional Banach spaces
(see \cite{AroBes99,BerBotFavJat13,CarDimMur07,Pet01,Pet06}).
The Godefroy - Shapiro theorem has been improved by Bonilla and Grosse-Erdmann. They showed that non-trivial convolution operators are even frequently hypercyclic, and have frequently hypercyclic entire functions satisfying some exponential growth condition (see \cite{BonGro06}).

Recent work developed by Bayart and Matheron \cite{BayMatSMALL} provides some other eigenvector criteria to determine whether a given continuous map $T : X \to X$ acting on a topological space $X$ admits an ergodic probability measure, or a strong mixing one. When the measure is strictly positive on any non void open set of $X$, ergodic properties on $T$ imply topological counterparts. In particular, if a continuous map $T : X \to X$ happens to be ergodic with respect to some Borel probability measure $\mu$ with full support, then almost every $x \in X$ (relative to $\mu$) has a dense $T$-orbit. Moreover, from Birkhoff's ergodic theorem, we can obtain frequent hypercyclicity.

In this article we study convolution operators on spaces of entire functions defined on Banach spaces. We show that under suitable conditions, non-trivial convolution operators are strongly mixing, and in particular, frequently hypercyclic. In the same spirit as Bonilla and Grosse-Erdmann, we also obtain the existence of frequently hypercyclic entire functions of exponential growth associated to these operators. We also prove the existence of frequently hypercyclic subspaces for a given non-trivial convolution operator, that is, the existence of closed infinite-dimensional subspaces in which every non-zero vector is a frequently hypercyclic function. Finally, we study particular cases of non-trivial convolution operators such as translations and partial differentiation operators. In this cases we obtain bounds of the exponential growth of the frequently hypercyclic entire functions.


\section{Holomorphic functions of $\u$-bounded type}\label{section hbu}

In this section we recall the basic properties of holomorphic functions on Banach spaces, the best general reference here is \cite{Din99}. We also introduce the spaces of entire functions $\hbu(E)$ and convolution operators therein.

Throughout this article $E$ is a complex Banach space.
A mapping $P: E \to \zC$ is a continuous $k$-homogeneous polynomial if there exists a (necessarily unique) continuous and symmetric $k$-linear form $L:E^k \to \zC$ such that $P(z)=L(z, \ldots, z)$ for all $z \in E$. For example, given $\gamma \in  E^{\prime}$, the function $P(z)=\gamma(z)^k$ is a $k-$homogeneous polynomial. The space of all continuous $k$-homogeneous polynomials from $E$ to $\mathbb{C}$, endowed with the norm $\Vert P \Vert_{\PP(^kE)}=\sup_{\Vert z \Vert_E=1} \vert P(z) \vert$ is a Banach space and it will be denoted by $\PP(^kE)$. The space $\PP(^0E)$ is just $\zC$. The space of {\it finite type} polynomials, denoted by $\PP_f(^kE)$, is the subspace of $\PP(^kE)$ spanned by $\{\gamma(\cdot)^k\}_{\gamma \in E^{\prime}}$.

The space of holomorphic functions from $E$ to $\mathbb C$ is denoted by $\mathcal H(E)$. If $f=\sum_{k\ge0} f_k$ is the Taylor series expansion of such a function, then it converges uniformly in some neighborhood around the point of expansion. The space of holomorphic functions whose Taylor series have infinite radius of uniform convergence is denoted $\mathcal H_b(E)$. Such functions are bounded on bounded sets, and are said to be of bounded type. The space $\mathcal H_b(E)$ is a Fr\'echet space when considered with the topology of uniform convergence on bounded sets of $E$.

Given $P\in\p(^kE)$, $a\in E$ and $0 \le j\le k$, let $P_{a^j}\in \p^{k-j}(E)$ be the polynomial defined by $$
P_{a^j}(x)=\v P(a^j,x^{k-j})=\v
P(\underbrace{a,...,a}_j,\underbrace{x,...,x}_{k-j}),$$ where $\v P$ is the unique symmetric $k$-linear form associated to $P$. We write $P_a$ instead of $P_{a^1}$.

Let us recall the definition of a polynomial ideal \cite{Flo01,Flo02}.
\begin{definition}\rm
A Banach ideal of scalar-valued continuous $k$-homogeneous polynomials, $k \ge 0$, is a pair
$(\mathfrak{A}_k,\|\cdot\|_{\mathfrak A_k})$ such that:
\begin{enumerate}
\item[(i)] For every Banach space $E$, $\mathfrak{A}_k(E)=\mathfrak A_k\cap
\mathcal
P(^kE)$ is a linear subspace of $\p(^kE)$ and $\|\cdot\|_{\u_k(E)}$
is a norm on it. Moreover, $(\u_k(E), \|\cdot\|_{\u_k(E)})$ is a
Banach space.

\item[(ii)] If $T\in \l (E_1,E)$ and $P \in \u_k(E)$, then $P\circ T\in
\u_k(E_1)$ with $$ \|
P\circ T\|_{\u_k(E_1)}\le \|P\|_{\u_k(E)} \| T\|^k.$$

\item[(iii)] $z\mapsto z^k$ belongs to $\u_k(\mathbb C)$
and has norm 1.
\end{enumerate}
\end{definition}

The concept of holomorphy type was introduced by Nachbin \cite{Nac69}. We will use it in the following slightly modified version (see \cite{Mur12}).
\begin{definition}\label{defihtype}\rm
Consider the sequence $\u=\{\u_k\}_{k=0}^\infty$, where for each
$k$, $\u_k$ is a Banach ideal of  $k$-homogeneous
polynomials. We say that $\{\u_k\}_k$ is a holomorphy type if there exists constants $c,\, c_{k,l}$ such that $c_{k,l}\le c^k$ for every $0 \le l \le k$ and such that for every Banach space $E$, $P\in \u_{k}(E)$ and $a\in E$,
\begin{equation}\label{htype}
P_{a^l}\textrm{ belongs to } \u_{k-l}(E)\textrm{ and } \|P_{a^l}\|_{\u_{k-l}(E)} \le c_{k,l}
\|P\|_{\u_{k}(E)} \|a\|^l.
\end{equation}
\end{definition}

There is a natural way to associate to a holomorphy type $\u$ a class of entire functions of bounded type on a Banach
space $E$, as the set of entire functions with infinite $\u$-radius of convergence at zero, and hence at every point (see \cite{CarDimMur07,FavJat09}).
\begin{definition}\rm
Let $\u=\{\u_k\}_k$ be a holomorphy type and $E$ be a Banach space. The space of entire functions of
$\u$-bounded type on $E$ is the set 
$$
\hbu(E):=\left\{ f\in \mathcal H(E): \;  d^kf(0)\in \u_k(E) \textrm{ for every }k \textrm{ and }  \lim_{k\rightarrow \infty}
\Big\|\frac{d^kf(0)}{k!}\Big\|_{\u_k}^{1/k}=0\right\}.
$$
\end{definition}
We consider in $\hbu(E)$ the family of seminorms $\{p_s\}_{s>0}$, given by
$$
p_s(f)=\sum_{k=0}^{\infty} s^k
\left\|\frac{d^kf(0)}{k!}\right\|_{\u_k},
$$
for all $f\in \hbu(E)$. It is easy to check that $\left(\hbu(E),\{p_s\}_{s>0}\right)$ is a Fr\'echet space.

The following example collects some of the spaces of entire functions of bounded type that may be constructed in this way. See the references given in each case for the definition and details.
\begin{example}\label{ejemplos htype}\rm
\begin{enumerate}
\item[(i)]If we let $\u_k=\p^k$, the ideal of all $k$-homogeneous continuous polynomials, then the topology induced on $\hbu(E)$ by $\{p_s\}_{s>0}$ is equivalent to the usual topology of uniform convergence on bounded sets. Therefore $\hbu(E)=\mathcal H_b(E)$.
\item[(ii)] If $\u$ is the sequence of ideals of nuclear polynomials then $\hbu(E)$
is the space of holomorphic functions of nuclear bounded type
$H_{Nb}(E)$ defined by Gupta and Nachbin (see \cite{Gup70}).
\item[(iii)] If $E$ is a Hilbert space and $\u$ is the sequence of ideals of Hilbert-Schmidt polynomials, then $\hbu(E)$ is the space $H_{hs}(E)$ of entire functions of Hilbert-Schmidt type (see \cite{Dwy71,Pet01}).
\item[(iv)] If $\u$ is the sequence of ideals of approximable polynomials, then $\hbu(E)$ is the space $H_{bc}(E)$ of entire functions of compact bounded type (see for example \cite{Aro79,AroBes99}).
\item[(v)] If $\u$ is the sequence of ideals of weakly continuous on bounded sets
polynomials, then $\hbu(E)$ is the space $H_{wu}(E)$ of weakly uniformly continuous holomorphic functions on bounded sets defined by Aron in \cite{Aro79}.
\item[(vi)] If $\u$ is the sequence of ideals of extendible polynomials,
then $\hbu(E)$ is the space of extendible functions of bounded type defined in \cite{Car01}.
\item[(vii)] If $\u$ is the sequence of ideals of integral polynomials, then $\hbu(B_E)$ is the space of
integral holomorphic functions of bounded type $H_{bI}(B_E)$ defined
in \cite{DimGalMaeZal04}.
\end{enumerate}
\end{example}

Given $\u=\{\u_k\}_k$ a holomorphy type, the Borel transform is the  operator $\beta:\hbu(E)'\to \zH(E')$
which assigns to each element $\varphi\in\hbu(E)'$ the holomorphic function $\beta(\varphi)\in \zH(E'),$
given by $\beta(\varphi)(\gamma)=\varphi(e^\gamma)$. The following proposition is well-known (see for
example \cite[p.264]{Din71(holomorphy-types)} or \cite[p.915]{FavJat09}).

\begin{proposition}\label{beta_inyectiva}
Let $\u=\{\u_k\}_k$ be a holomorphy type and $E$ be a Banach space such that finite type polynomials are dense
in $\u_k(E)$ for every $k$. Then the Borel transform is an injective linear transformation.
\end{proposition}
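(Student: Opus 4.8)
The plan is to dispatch linearity first --- it is immediate, since for each fixed $\gamma\in E'$ the map $\varphi\mapsto\varphi(e^\gamma)$ is linear --- and then to concentrate on injectivity, which is the real content. So I would take $\varphi\in\hbu(E)'$ with $\beta(\varphi)=0$, i.e. $\varphi(e^\gamma)=0$ for every $\gamma\in E'$, and aim to show that $\varphi=0$.

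The first step is to observe that the exponentials genuinely belong to $\hbu(E)$ and, more precisely, that for each $\gamma\in E'$ and each $t\in\C$ the series $\sum_{k\ge0}\tfrac{t^k}{k!}\gamma^k$ converges to $e^{t\gamma}$ in the Fr\'echet topology of $\hbu(E)$: by axioms (ii)--(iii) of a Banach ideal, $z\mapsto z^k$ has norm $1$ in $\u_k(\C)$ and $\gamma\in\mathcal L(E,\C)$, so $\gamma^k\in\u_k(E)$ with $\|\gamma^k\|_{\u_k(E)}\le\|\gamma\|^k$, and hence $p_s\big(\sum_{k>N}\tfrac{t^k}{k!}\gamma^k\big)\le\sum_{k>N}\tfrac{(s|t|\,\|\gamma\|)^k}{k!}\to0$ for every $s>0$. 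Since $\varphi$ is continuous and linear I may apply it term by term, obtaining for every $t\in\C$
$$
0=\beta(\varphi)(t\gamma)=\varphi(e^{t\gamma})=\sum_{k\ge0}\frac{t^k}{k!}\,\varphi(\gamma^k).
$$
The right-hand side is an entire function of the single variable $t$ vanishing identically, so all its coefficients vanish: $\varphi(\gamma^k)=0$ for every $k\ge0$ and every $\gamma\in E'$, and hence, by linearity, $\varphi$ annihilates $\PP_f(^kE)=\mathrm{span}\{\gamma^k:\gamma\in E'\}$ for each $k$. Next I would note that the restriction of $\varphi$ to the subspace $\u_k(E)\subset\hbu(E)$ of homogeneous polynomials, regarded as entire functions, is continuous for the $\u_k(E)$-norm --- if $|\varphi(\cdot)|\le C\,p_s(\cdot)$ then $|\varphi(P)|\le C s^k\|P\|_{\u_k(E)}$ for $P\in\u_k(E)$ --- so the density hypothesis forces $\varphi|_{\u_k(E)}=0$ for every $k$. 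Finally, for arbitrary $f=\sum_{k\ge0}\tfrac{d^kf(0)}{k!}\in\hbu(E)$ the Taylor partial sums converge to $f$ in $\hbu(E)$ (because $p_s(f)<\infty$ for every $s$), and continuity and linearity of $\varphi$ give $\varphi(f)=\sum_k\varphi\big(\tfrac{d^kf(0)}{k!}\big)=0$. Hence $\varphi=0$ and $\beta$ is injective.

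I expect the only delicate points to be topological: one must make sure the exponential and Taylor expansions converge in the Fr\'echet topology generated by $\{p_s\}_{s>0}$ --- not merely pointwise or uniformly on bounded sets --- so that the continuity of $\varphi$ legitimately allows one to interchange $\varphi$ with the sums. The other essential ingredient is the density assumption, which is precisely what upgrades ``$\varphi$ kills every $\gamma^k$'' to ``$\varphi$ kills every element of $\u_k(E)$''; without it the argument only shows that $\varphi$ annihilates $\PP_f(^kE)$, which may be a proper subspace. Everything else --- linearity, the one-variable power-series identity, and the extraction of Taylor coefficients --- is formal.
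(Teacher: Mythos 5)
Your proof is correct, and it is essentially the standard argument that the paper itself omits (the paper only cites \cite{Din71(holomorphy-types)} and \cite{FavJat09} for this ``well-known'' fact): expand $e^{t\gamma}$ in the $\{p_s\}$-topology to get $\varphi(\gamma^k)=0$, use that $\varphi$ restricted to $\u_k(E)$ is $\|\cdot\|_{\u_k(E)}$-continuous (since $p_s(P)=s^k\|P\|_{\u_k(E)}$ for $P\in\u_k(E)$) together with the density of $\PP_f(^kE)$, and conclude via convergence of the Taylor series of $f$ in $\hbu(E)$. All the delicate topological points you flag (convergence of the exponential and Taylor expansions in the seminorms $p_s$, and $\|\gamma^k\|_{\u_k(E)}\le\|\gamma\|^k$ from the ideal axioms) are handled correctly.
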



Finite type polynomials are dense in $\u_k(E)$ in many cases. For example, finite type polynomials are dense
in the spaces of nuclear, Hilbert-Schmidt and approximable polynomials. They are also dense in $\mathcal
P(^kE)$ if $E$ is $c_0$ or the Tsirelson space and in the spaces of integral and extendible polynomials if $E$
is Asplund \cite{CarGal11(radon-nikodym)}. On the other hand, separability is a necessary condition to deal with hypercyclicity issues on $\hbu(E)$ and, up to our knowledge, on every example of separable space of polynomials, finite type polynomials are dense.

We also note that a holomorphy type such that finite type polynomials are dense is essentially what is
called an $\alpha$-$\beta$-holomorphy type in \cite{Din71(holomorphy-types)} and a $\pi_1$-holomorphy type in
\cite{FavJat09,BerBotFavJat13}.

We denote by $\tau_x(f):=f(x+\cdot)$ the translation operator by $x$, which is a continuous linear
operator on $\hbu(E)$ (see \cite{CarDimMur07,FavJat09}).
The following is the usual definition of convolution operator.

\begin{definition}\rm
Let $\u=\{\u_k\}_k$ be a holomorphy type and $E$ be a Banach space. A linear continuous operator $T$ defined on $\hbu(E)$
is a  {\it convolution operator}, if for every $x\in E$ we have $T\circ\tau_x=\tau_x\circ T$. We say that $T$ is trivial if it is a multiple
of the identity.
\end{definition}

The following proposition provides a description of convolution operators on $\hbu(E)$. Its proof follows as \cite[Proposition 4.7]{CarDimMur07}.
\begin{proposition}\label{convolution}
Let $\u=\{\u_k\}_k$ be a holomorphy type and $E$ be a Banach space. Then for each convolution operator
$T:\hbu(E)\to\hbu(E)$
there exists a linear functional $\varphi\in\hbu(E)'$ such that
$$
T(f) =\varphi\ast f,
$$
for every $f \in\hbu(E)$, where $\varphi\ast f (x):=\varphi(\tau_x f)=\varphi(f(x+\cdot))$.
\end{proposition}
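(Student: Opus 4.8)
The plan is to produce $\varphi$ by ``evaluating $Tf$ at the origin'' and then to deduce the convolution formula purely from the commutation relation $T\circ\tau_x=\tau_x\circ T$. Precisely, I would set
\[
\varphi:=\delta_0\circ T,\qquad\text{i.e.}\qquad \varphi(f):=T(f)(0),
\]
where $\delta_0\colon\hbu(E)\to\zC$ denotes evaluation at $0$, and then show that this $\varphi$ does the job.

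The first step is to check that $\varphi\in\hbu(E)'$. Since $T$ is continuous by assumption, it suffices to see that $\delta_0$ (indeed every point evaluation) is continuous on $\hbu(E)$. This follows from the axioms of a Banach ideal: for $P\in\u_k(E)$ and $z\in E$ with $\|z\|\le1$, composing $P$ with the map $\lambda\mapsto\lambda z$ and using properties (ii)--(iii) of the definition of Banach ideal gives $|P(z)|\le\|P\|_{\u_k(E)}$, so the ideal norm dominates the sup norm on $\p(^kE)$; summing the Taylor series of $f\in\hbu(E)$ then yields $|f(x)|\le p_{\|x\|}(f)$, and in particular $|f(0)|\le p_s(f)$ for every $s>0$. (This also re-proves that each $\tau_x$ maps $\hbu(E)$ into itself, which we need for $\varphi(\tau_x f)$ to make sense; continuity of the translations $\tau_x$ is in any case already recorded in \cite{CarDimMur07,FavJat09}.) Hence $\varphi=\delta_0\circ T$ is a continuous linear functional.

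The second step is the verification of the formula, and here the commutation hypothesis does all the work: for fixed $f\in\hbu(E)$ and $x\in E$,
\[
\varphi\ast f(x)=\varphi(\tau_x f)=\bigl(T(\tau_x f)\bigr)(0)=\bigl(\tau_x(Tf)\bigr)(0)=(Tf)(x).
\]
As this holds for all $x\in E$, the function $\varphi\ast f$ coincides with $Tf$; in particular $\varphi\ast f$ lies in $\hbu(E)$ and $T(f)=\varphi\ast f$. I do not expect a genuine obstacle here: the only points needing care are the continuity of the evaluation functionals (handled above) and the fact that the candidate $\varphi\ast f$ is a priori merely a function on $E$ — but this is resolved for free by the pointwise identity with $Tf$. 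The argument runs parallel to \cite[Proposition 4.7]{CarDimMur07}.
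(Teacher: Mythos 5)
Your proposal is correct and is essentially the paper's own proof: the paper also takes $\varphi=\delta_0\circ T$ and obtains $T(f)(x)=[\tau_x T(f)](0)=T(\tau_x f)(0)=\varphi(\tau_x f)=\varphi\ast f(x)$ from the commutation relation. Your additional verification that point evaluations are continuous on $\hbu(E)$ is a harmless elaboration of a fact the paper leaves implicit.
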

\begin{proof}
Let $\varphi =\delta_0\circ T$, i.e.
$\varphi(f) = T(f)(0)$ for $f \in\hbu(E)$. Then $\varphi\in\hbu(E)'$ and
$$
T(f)(x) = \left[\tau_x T(f) \right](0) = T(\tau_x f)(0) = \varphi(\tau_x f) = \varphi\ast f(x),
$$
for every $f\in\hbu(E)$ and $x \in E$.
\end{proof}

\section{Strongly mixing convolution operators}

In this section we prove our first main theorem, which states that under some fairly general conditions on the space $E$ and the holomorphy type $\u$, non-trivial convolution operators on $\hbu(E)$ are strongly mixing in the gaussian sense.
First we recall the following definitions.
\begin{definition}\rm
A Borel probability measure on $X$ is gaussian if and only if it is the
distribution of an almost surely convergent random series of the form $\xi=\sum_{n=0}^\infty{g_nx_n}$, where $(x_n) \subset X$ and $(g_n)$ is a sequence of independent, standard complex gaussian variables.
\end{definition}

Recall that for an operator $T:X\to X$ we say that $\mu$ is a $T$-invariant ergodic measure if $\mu(A)=\mu(T^{-1}A)$ for all measurable sets $A\subset X$ and if given $A$, $B$ measurable sets of positive measure then one can find an integer $n\ge 0$ such that $T^n(A)\cap B \neq \emptyset$. When the measure $\mu$ is strictly positive on all non void open sets, ergodicity implies that $T$ is topologically transitive, hence hypercyclic. Additionally, Birkhoff's ergodic theorem implies that $T$ is frequently hypercyclic.

We are specially interested in a condition stronger that ergodicity, namely {\it strongly mixing} with respect to some gaussian probability measure.

\begin{definition}\rm
We say that an operator $T\in L(X)$ is strongly mixing in the gaussian sense if there exists some gaussian $T$-invariant probability measure $\mu$ on $X$ with full support such that for any measurable sets $A,\,B\subset X$ we have
$$
\lim_{n\to\infty}\mu(A\cap T^{-n}(B))=\mu(A)\mu(B).
$$
\end{definition}

We will use the following theorem due to Bayart and Matheron, which is a corollary of \cite[Theorem 1.1]{BayMatSMALL}.

\begin{theorem}[Bayart-Matheron]\label{Bayart_Matheron}
Let $X$ be a complex separable Fr\'echet space, and let $T \in L(X)$. Assume that for any $D \subset \T$ such that $\T\setminus D$ is dense in $\T$, the linear span of
$\bigcup_{\lambda\in\T\setminus D} \ker(T-\lambda)$ is dense in $X$. Then $T$ is strongly mixing in the gaussian sense. In particular, $T$ is a frequently hypercyclic operator.
\end{theorem}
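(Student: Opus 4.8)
The plan is simply to carry out the reduction announced before the statement: deduce the theorem from \cite[Theorem 1.1]{BayMatSMALL}, so the only thing to verify is that the hypothesis assumed here is (formally stronger than, hence) a particular case of the hypothesis of that theorem. Recall that the main result of \cite{BayMatSMALL} gives a sufficient condition for an operator $T\in L(X)$ on a complex separable Fr\'echet space $X$ to be strongly mixing in the gaussian sense: it suffices that the unimodular eigenvectors of $T$ be ``spanning away from small subsets of $\T$'', that is, that the closed linear span of $\bigcup_{\lambda\in\T\setminus D}\ker(T-\lambda)$ be all of $X$ for every $D$ lying in a suitable class of ``small'' (in particular Lebesgue--null) subsets of $\T$, namely sets of extended uniqueness in their terminology. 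The substantive content of \cite{BayMatSMALL}, which I would use as a black box, is the construction, out of such a spanning family, of a $T$-eigenvector field $\lambda\mapsto E(\lambda)$ on $\T$ together with a continuous Rajchman probability measure $\sigma$ on $\T$, in such a way that the distribution $\mu$ of the gaussian vector obtained by integrating $E$ against a $\sigma$-based complex gaussian random measure is $T$-invariant, has full support (the spanning condition survives on $\sigma$-conull sets), and is strongly mixing (because $\sigma$ is Rajchman).

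Granting this, the proof is immediate: every set $D$ in the relevant class of small subsets of $\T$ has $\T\setminus D$ co-null, in particular dense in $\T$, so the hypothesis assumed here --- that the closed linear span of $\bigcup_{\lambda\in\T\setminus D}\ker(T-\lambda)$ equals $X$ for \emph{every} $D$ with $\T\setminus D$ dense --- covers all such $D$, and thus the hypothesis of \cite[Theorem 1.1]{BayMatSMALL} is met and $T$ is strongly mixing in the gaussian sense. For the final clause I would invoke the standard passage from ergodicity to frequent hypercyclicity already recalled in the Introduction: a strongly mixing gaussian measure $\mu$ is in particular $T$-invariant, ergodic and of full support, so fixing a countable basis $(V_j)_j$ of nonempty open subsets of $X$ --- each with $\mu(V_j)>0$ --- Birkhoff's ergodic theorem applied to the functions $\mathbf{1}_{V_j}$ shows that for $\mu$-almost every $x$ the $T$-orbit of $x$ meets each $V_j$ along a set of integers of positive lower density, i.e.\ is frequently hypercyclic.

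I do not expect any genuine analytic obstacle here; the only delicate point is bibliographical, namely to match the precise formulation of \cite[Theorem 1.1]{BayMatSMALL} (and the exact meaning of ``small'' there) and to confirm that its hypothesis really is implied by ``spanning over every dense subset of $\T$''. It is worth emphasizing why this particular reformulation is the convenient one for the sequel: the unimodular eigenvectors of a convolution operator $T$ on $\hbu(E)$, say $Tf=\varphi\ast f$ with $\varphi\in\hbu(E)'$, are the exponentials $e^\gamma$ with $\varphi(e^\gamma)\in\T$ --- indeed $Te^\gamma=\varphi(e^\gamma)\,e^\gamma$ --- and in the next section the hypotheses on $E$ and on the holomorphy type $\u$ will be used precisely to show that these exponentials still span $\hbu(E)$ after discarding those $\gamma$ whose eigenvalue falls into any prescribed dense subset of $\T$, which is exactly the hypothesis of the theorem above.
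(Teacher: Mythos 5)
Your proposal is correct and matches the paper's treatment: the paper offers no independent proof, stating the result as a corollary of \cite[Theorem 1.1]{BayMatSMALL}, and your reduction (small sets of the circle are Lebesgue-null, hence have dense complement, so the assumed spanning condition subsumes the one required there) together with the standard Birkhoff-ergodic-theorem passage from an ergodic full-support invariant measure to frequent hypercyclicity is exactly the intended argument.
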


The next lemma is the key to prove that convolution operators are strongly mixing and it we will be used throughout the article.

\begin{lemma}\label{eigen_densos}
Let $E$ be a Banach space with separable dual and let $\u$ be a holomorphy type such that finite
type polynomials are dense in $\u_k(E)$ for every $k$. Let $\phi\in \zH(E')$ not constant
and $B\subset \C$.
Suppose that there exist $\gamma_0\in E'$ such that $\phi(\gamma_0)$ is an accumulation point of $B$. Then
$span\{e^\gamma: \, \phi(\gamma)\in B\}$ is dense in $\hbu(E)$.
\end{lemma}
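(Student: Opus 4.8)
The plan is to use a duality argument: to show that $V := \mathrm{span}\{e^\gamma : \phi(\gamma) \in B\}$ is dense in $\hbu(E)$, it suffices to show that any $\psi \in \hbu(E)'$ vanishing on $V$ vanishes identically. Fix such a $\psi$. Since finite type polynomials are dense in $\u_k(E)$ for every $k$, Proposition~\ref{beta_inyectiva} tells us that the Borel transform $\beta$ is injective, so it is enough to prove that $\beta(\psi) = 0$ in $\zH(E')$, i.e. that the entire function $g(\gamma) := \psi(e^\gamma)$ on $E'$ is identically zero. By hypothesis, $g(\gamma) = 0$ whenever $\phi(\gamma) \in B$.

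The key idea is to exploit the accumulation point together with the fact that, for holomorphic functions on Banach spaces, one-variable slices are genuinely holomorphic in $\C$. Pick $\gamma_0$ with $\phi(\gamma_0)$ an accumulation point of $B$, and let $(b_n) \subset B$, $b_n \to \phi(\gamma_0)$ with $b_n \ne \phi(\gamma_0)$. First I would reduce the problem to a one-dimensional statement. The natural move is to consider, for an arbitrary $\eta \in E'$, the holomorphic curve $\zeta \mapsto \gamma_0 + \zeta\eta$ and then compose with $\phi$; but $\phi$ need not be injective along this line, so I instead argue as follows. Consider $h(\zeta) := g(\gamma_0 + \zeta \eta)$, an entire function of one complex variable. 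I want to show $h \equiv 0$ for every $\eta$; since $g$ is determined by its values on all complex lines through $\gamma_0$ (and then translating the base point), and $\hbu(E)$-functionals are determined by $\beta$, this will give $g \equiv 0$. To show $h \equiv 0$, the strategy is to find, near $\zeta = 0$, many values of $\zeta$ for which $\phi(\gamma_0 + \zeta\eta) \in B$ and these values accumulate at a point — then $h$ vanishes on a set with an accumulation point and hence vanishes identically by the identity theorem in one variable. This works when $\phi$ is non-constant along the line $\gamma_0 + \zeta\eta$: the map $\zeta \mapsto \phi(\gamma_0 + \zeta\eta)$ is then a non-constant entire function, hence open, so it maps every neighborhood of $0$ onto a neighborhood of $\phi(\gamma_0)$, which contains infinitely many $b_n$; pulling these back gives $\zeta_n \to 0$ with $\phi(\gamma_0 + \zeta_n \eta) = b_n \in B$, so $h(\zeta_n) = 0$, forcing $h \equiv 0$, in particular $g(\gamma_0) = g(\gamma_0 + \eta \cdot 1)$-type conclusions; more precisely $g(\gamma_0 + \zeta\eta) = 0$ for all $\zeta$, all such $\eta$.

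The remaining case is the set of directions $\eta$ along which $\phi(\gamma_0 + \zeta\eta) \equiv \phi(\gamma_0)$ is constant. Here the above trick fails, but one can handle it by a perturbation/continuity argument: $\phi$ is not constant on $E'$, so the directions of non-constancy are ``generic''. Concretely, having shown $g = 0$ on $U := \{\gamma_0 + \zeta\eta : \zeta \in \C,\ \eta \text{ a non-constancy direction}\}$, I would observe that $U$ contains an open subset of $E'$ — or at least a set large enough that, by the identity principle for holomorphic functions on Banach spaces (a holomorphic function vanishing on a nonempty open set of a connected open set vanishes identically, and $E'$ is connected), $g \equiv 0$. To see $U$ has nonempty interior: fix $\eta_0$ with $\phi(\gamma_0 + \zeta\eta_0)$ non-constant; for $\eta$ in a neighborhood of $\eta_0$ the curve $\zeta \mapsto \phi(\gamma_0 + \zeta\eta)$ remains non-constant (non-constancy is detected by a nonzero Taylor coefficient, an open condition), so all such $\eta$ are non-constancy directions, and $\{\gamma_0 + \zeta\eta : |\zeta| < \varepsilon,\ \|\eta - \eta_0\| < \varepsilon\}$ is a neighborhood of $\gamma_0$ (using $\eta_0 \ne 0$ after scaling) contained in $U$. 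Hence $g$ vanishes on an open set, so $g \equiv 0$, so $\psi = 0$, completing the proof.

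The separability of $E'$ is used to ensure $\hbu(E)$ is separable so that the duality/Hahn–Banach argument is meaningful in the Fréchet setting, and it also guarantees (together with the density of finite type polynomials) that we are in a situation where Proposition~\ref{beta_inyectiva} applies cleanly. \textbf{Main obstacle.} The delicate point is the passage from the one-dimensional identity theorem to the conclusion $g \equiv 0$ on all of $E'$ — i.e.\ correctly handling the directions along which $\phi$ is constant and assembling the slice-wise vanishing into vanishing on an open set, then invoking the infinite-dimensional identity principle. Everything else (the duality reduction via $\beta$, the openness of non-constant one-variable holomorphic maps) is standard.
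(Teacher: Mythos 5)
Your proof is correct in substance and shares the paper's core mechanism: reduce by Hahn--Banach and the injectivity of the Borel transform (Proposition \ref{beta_inyectiva}) to showing $\beta(\psi)=0$, then slice along complex lines through $\gamma_0$ on which $\phi$ is non-constant, use the openness of non-constant one-variable holomorphic maps to pull back points of $B$ accumulating at $\phi(\gamma_0)$, and apply the one-variable identity theorem to get vanishing of $\beta(\psi)$ on each such line. Where you diverge is the globalization step. The paper uses the separability of $E'$ to pick a countable basis $\{U_k\}$ of $E'$ and, for each $k$, a line through $\gamma_0$ meeting $U_k$ on which $\phi$ is non-constant; the union of these lines is dense, and continuity of $\beta(\psi)$ finishes the argument. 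You instead observe that the non-constancy directions form a nonempty open set (non-constancy of $\zeta\mapsto\phi(\gamma_0+\zeta\eta)$ is witnessed by a nonzero homogeneous Taylor term of $\phi$ at $\gamma_0$, an open condition in $\eta$), deduce that the union $U$ of the good lines has nonempty interior, and invoke the identity principle for holomorphic functions on the connected space $E'$. This buys a little generality: your route never actually uses separability of $E'$ (contrary to your closing remark --- Hahn--Banach requires no separability, and Proposition \ref{beta_inyectiva} only needs density of finite type polynomials), at the cost of invoking the infinite-dimensional identity principle, which the paper's density-of-lines argument avoids. One slip: the set $\{\gamma_0+\zeta\eta:\ |\zeta|<\varepsilon,\ \|\eta-\eta_0\|<\varepsilon\}$ is \emph{not} a neighborhood of $\gamma_0$ in general (it is a cone-shaped set with vertex $\gamma_0$); this is harmless, though, because fixing any $\zeta_1$ with $0<|\zeta_1|<\varepsilon$ already places the open ball $\gamma_0+\zeta_1\{\eta:\|\eta-\eta_0\|<\varepsilon\}$ inside $U$, which is all that the identity principle needs.
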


\begin{proof}
Let $\Phi\in \hbu(E)'$ be a functional vanishing on $\{e^\gamma: \, \phi(\gamma)\in B\}$. Note that this means that $\beta(\Phi)$ vanishes on $\phi^{-1}(B)$. By Proposition \ref{beta_inyectiva}, it suffices to show that $\beta(\Phi)=0$.

Fix $\gamma_0\in E'$ such that $\phi(\gamma_0)$ is an accumulation point of $B$. We claim that there exist a
sequence of complex lines $L_k$, $k\in\N$,
through $\gamma_0$ such that $\phi$ is not constant on each $L_k$ and $\bigcup_k L_k$ is dense in $E'$.
Indeed, let $\{U_k\}_{k\in\mathbb{N}}$, be open sets that form a basis of the topology of $E'$. Since $\phi$
is not constant, there exists, for each $k$, a complex line $L_k$ through $\gamma_0$
that meets $U_k$ and on which $\phi$ is not constant.

Now let $k\in \N$. Since $\phi$ is not constant on $L_k$, $\phi|_{L_k}$ is an open mapping,
and hence $\gamma_0$ is an accumulation point of $\phi^{-1}(B)\cap L_k$. But, $\beta(\Phi)$ vanishes
on $\phi^{-1}(B)$. Thus, $\beta(\Phi)$ also vanishes on $L_k$. Since $\bigcup_k L_k$ is dense in $E'$,
$\beta(\Phi)=0$.
\end{proof}

We are now able to prove that convolution operators on $\hbu(E)$ are strongly mixing in the gaussian sense.

\begin{theorem}
Let $\u=\{\u_k\}_k$ be a holomorphy type and $E$ a Banach space with separable dual such that the finite type polynomials are dense in $\u_k(E)$ for every $k$. If $T:\hbu(E)\to\hbu(E)$ is a non-trivial convolution operator, then $T$ is strongly mixing in the gaussian sense.
\end{theorem}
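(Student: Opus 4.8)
The plan is to verify the hypothesis of the Bayart--Matheron criterion (Theorem~\ref{Bayart_Matheron}): I will show that for every $D\subset\T$ with $\T\setminus D$ dense in $\T$, the linear span of $\bigcup_{\lambda\in\T\setminus D}\ker(T-\lambda)$ is dense in $\hbu(E)$. The eigenvectors will be the exponentials $e^\gamma$, and the density will come from Lemma~\ref{eigen_densos}.

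First I would identify the eigenvalues. By Proposition~\ref{convolution} there is $\varphi\in\hbu(E)'$ with $Tf=\varphi\ast f$; since $e^\gamma(x+y)=e^{\gamma(x)}e^\gamma(y)$ we have $\tau_x e^\gamma=e^{\gamma(x)}e^\gamma$, so $Te^\gamma(x)=\varphi(\tau_x e^\gamma)=\varphi(e^\gamma)e^{\gamma(x)}=\phi(\gamma)e^\gamma(x)$, where $\phi:=\beta(\varphi)\in\zH(E')$. Thus each $e^\gamma$ is an eigenvector of $T$ for the eigenvalue $\phi(\gamma)$. Next I would observe that $\phi$ is non-constant: otherwise $Te^\gamma=\lambda e^\gamma$ for every $\gamma\in E'$, and since the Borel transform is injective (Proposition~\ref{beta_inyectiva}) the span of $\{e^\gamma:\gamma\in E'\}$ is dense in $\hbu(E)$, so that $T=\lambda I$ would be trivial, contrary to hypothesis.

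The key step --- and the one I expect to be the main obstacle --- is to produce, for a given admissible $D$, a point $\gamma_0\in E'$ such that $\phi(\gamma_0)$ is an accumulation point of $\T\setminus D$; since $\T\setminus D$ is dense in the perfect set $\T$, it suffices to find $\gamma_0$ with $\phi(\gamma_0)\in\T$, i.e.\ to show $\phi(E')\cap\T\neq\emptyset$. For this I would restrict $\phi$ to a complex line on which it is not constant (such a line exists through any point of $E'$, for otherwise $\phi$ would be globally constant); the restriction is a non-constant entire function of one complex variable, whose range can be contained neither in $\{|z|<1\}$ nor in $\{|z|>1\}$ by Liouville's theorem (applied to it and to its reciprocal, respectively), hence meets $\T$.

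Granting this, fix $D\subset\T$ with $\T\setminus D$ dense in $\T$, pick $\gamma_0$ with $\phi(\gamma_0)\in\T$, and apply Lemma~\ref{eigen_densos} with this $\phi$ and the set $B=\T\setminus D\subset\C$ to conclude that $span\{e^\gamma:\phi(\gamma)\in\T\setminus D\}$ is dense in $\hbu(E)$. Each such $e^\gamma$ lies in $\ker(T-\phi(\gamma))$ with $\phi(\gamma)\in\T\setminus D$, so the linear span of $\bigcup_{\lambda\in\T\setminus D}\ker(T-\lambda)$ is dense. Finally, $\hbu(E)$ is a complex separable Fr\'echet space --- separability being inherited from that of $E'$ via the density of finite type polynomials in each $\u_k(E)$ and the density of Taylor polynomials in $\hbu(E)$ --- so Theorem~\ref{Bayart_Matheron} applies and yields that $T$ is strongly mixing in the gaussian sense, in particular frequently hypercyclic. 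Everything past Step~1 and the range argument is a routine assembly of Propositions~\ref{beta_inyectiva} and~\ref{convolution}, Lemma~\ref{eigen_densos}, and Theorem~\ref{Bayart_Matheron}.
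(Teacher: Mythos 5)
Your proposal is correct and follows essentially the same route as the paper: represent $T$ as $\varphi\ast{}$, note $Te^\gamma=\beta(\varphi)(\gamma)e^\gamma$ with $\beta(\varphi)$ non-constant, produce a unimodular value of $\beta(\varphi)$ via Liouville plus connectedness, and conclude with Lemma~\ref{eigen_densos} and the Bayart--Matheron criterion. The only cosmetic difference is that you locate a point of $|\beta(\varphi)|=1$ by restricting to a complex line, while the paper argues with the sets $V,W\subset E'$ and arcwise connectedness of $\beta(\varphi)(E')$; the two arguments are interchangeable.
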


\begin{proof}
Let $\varphi\in\hbu(E)'$ be the linear functional defined in the proof of Proposition \ref{convolution}.
Since $T$ is not a multiple of the identity it follows that $\varphi$ is not a multiple of $\delta_0$.
Also, the fact that $\varphi$ is not a multiple of $\delta_0$ implies that $\beta(\varphi)$ is not a constant function.
Indeed, if $\beta(\varphi)$ were constant then $\lambda := \varphi(1) = \beta(\varphi)(0) = \beta(\varphi)(\gamma) = \varphi(e^\gamma)$ for all $\gamma\in E'$. But, on the other hand, $\lambda = \lambda\delta_0(e^\gamma)$ for all $\gamma\in E'$ and
we would have that $\varphi = \lambda\delta_0$.

It is rather easy to find eigenvalues and eigenvectors for $T$. Given $\gamma\in E'$,
$$
T(e^\gamma) = \varphi\ast e^\gamma = \left[x\mapsto \varphi(\tau_x e^\gamma) \right]
= \varphi(e^\gamma)\left[x\mapsto e^{\gamma(x)}\right] = \beta(\varphi)(\gamma) e^\gamma.
$$
By Theorem \ref{Bayart_Matheron}, it suffices to prove that the set of unimodular eigenvectors $\{e^\gamma\in \hbu(E)\,: |\beta(\varphi)(\gamma)|=1\}$ is big enough. Let us first prove that it is not empty. Define
$$
V=\{\gamma\in E' :\; |\beta(\varphi)(\gamma)|<1\} \; \textrm{  and  } \; W=\{\gamma\in E': |\beta(\varphi)(\gamma)|>1\}.
$$
Let us check that $V,W \subset E'$ are non void open sets. Indeed, if $V = \emptyset$, or $W = \emptyset$, then $\frac{1}{\beta(\varphi)}$, or $\beta(\varphi)$, would be a nonconstant bounded entire function. Since $\beta(\varphi)(E')$ is arcwise connected, we can deduce the existence of $\gamma_0\in E'$ such that $|\beta(\varphi)(\gamma_0)|=1$.

Let $D\subset\T$ such that $\T \setminus D$ is dense in $\T$. Then, $\beta(\varphi)(\gamma_0)$ is an accumulation point of $\T\setminus D$ and by Lemma \ref{eigen_densos} we get that the linear span of $\bigcup_{\lambda\in\T \setminus D} \ker(T-\lambda)$ is dense in $\hbu(E)$. By Theorem \ref{Bayart_Matheron}, it follows that $T$ is strongly mixing in the gaussian sense, as we wanted to prove.
\end{proof}

\section{Exponential growth conditions for frequently hypercyclic entire functions}

In this section we show that for every convolution operator there exists a frequently hypercyclic entire function satisfying a certain exponential growth condition.
First, we define and study a family of Fr\'echet subspaces of $\hbu(E)$ consisting of functions of exponential type; and then we show that every convolution operator on $\hbu(E)$ defines a frequently hypercyclic operator on these spaces.

\begin{definition}\rm
A function $f\in\hbu(E)$ is said to be of $M$-exponential type if there exist some constant $C>0$ such that $|f(x)|\leq C e^{M\|x\|}$, for all $x\in E$. We say that $f$ is of exponential type if it is of $M$-exponential type for some $M>0$.
\end{definition}

Now we define the subspaces of $\hbu(E)$ consisting of functions of exponential type.

\begin{definition}\rm
For $p>0$, let us define the space
$$
Exp_{\u}^p(E)=\left\{f\in \hbu(E) : \limsup_{k\to\infty} \|d^kf(0)\|_{\u_k}^{1/k}\leq p\right\},$$
endowed with the family of seminorms defined by
$$
q_r(f)=\sum_{k=0}^{\infty} r^k\|d^kf(0)\|_{\u_k} \quad \mbox{for } 0<r<1/p.
$$
\end{definition}

Below we collect some basic properties of the spaces $Exp_{\u}^p(E)$. Their proof is standard.

\begin{proposition}
Let $p$ be a positive number and $\u=\{\u_k\}_k$ a holomorphy type.
\begin{itemize}
\item[(a)] A function $f\in H(E)$ belongs to $Exp_{\u}^p(E)$ if and only if $d^kf(0)\in \u_k$ for all $k\in
\N$ and $q_r(f)<\infty,$ for all $0<r<1/p$.

\item[(b)] The space $(Exp_{\u}^p(E),\{q_r\}_{r<1/p})$ is a Fr\'echet space that is continuously and
densely embedded in $\hbu(E)$.

\item[(c)] If $E'$ is separable and finite type polynomials are dense in $\u_k(E)$ for every $k$, then $Exp_{\u}^p(E)$ is separable.

\item[(d)] Every function $f\in Exp_{\u}^p(E)$ satisfies the following growth condition: for each $\varepsilon>0$, there exists $C_\varepsilon>0$ such that
$$
|f(x)|\leq C_\varepsilon e^{(p+\varepsilon)\|x\|}, \;\;\; x\in E,
$$
that is, $f$ is of exponential type $p$.
\end{itemize}
\end{proposition}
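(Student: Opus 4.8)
My plan is to prove the four items by direct manipulation of Taylor coefficients, using repeatedly that, by the ideal axioms, $\|P\|_{\p(^kE)}\le\|P\|_{\u_k(E)}$ for every $P\in\u_k(E)$, that $\sup_{k}(t^k/k!)\le e^t$, and that $(k!)^{1/k}\to\infty$.

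For (a), if $f\in Exp_{\u}^p(E)$ and $r<1/p$, I pick $p<q<1/r$; then $\|d^kf(0)\|_{\u_k}\le q^k$ for all large $k$, so $q_r(f)=\sum_k r^k\|d^kf(0)\|_{\u_k}$ converges by comparison with a geometric series. Conversely, if $f\in H(E)$, $d^kf(0)\in\u_k$ for all $k$, and $q_r(f)<\infty$ for all $r<1/p$, then the terms $r^k\|d^kf(0)\|_{\u_k}$ tend to $0$, which forces $\limsup_k\|d^kf(0)\|_{\u_k}^{1/k}\le 1/r$ for every such $r$, hence $\le p$; since $(k!)^{1/k}\to\infty$ this gives $\|d^kf(0)/k!\|_{\u_k}^{1/k}\to 0$, and because $\|\cdot\|_{\p^k}\le\|\cdot\|_{\u_k}$ the same holds for the polynomial norms, so $f$ is of bounded type and thus $f\in\hbu(E)$; therefore $f\in Exp_{\u}^p(E)$.

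For (b), the topology is generated by the countable subfamily $\{q_{r_n}\}$ for any $r_n\uparrow 1/p$ (since $q_r\le q_{r'}$ whenever $r\le r'$), and it separates points because $q_r(f)=0$ for all $r$ forces every $d^kf(0)=0$. Completeness is the usual argument: for a $q$-Cauchy sequence $(f_j)$, each $(d^kf_j(0))_j$ is Cauchy in the Banach space $\u_k(E)$, the coefficientwise limits define a function $f$ which lies in $Exp_{\u}^p(E)$ by a Fatou estimate together with (a), and splitting $q_r(f_j-f)$ into a finite part and a geometric tail gives $f_j\to f$. Continuity of the inclusion into $\hbu(E)$ follows from
$$
p_s(f)=\sum_{k\ge0}\frac{s^k}{k!}\|d^kf(0)\|_{\u_k}\le\Big(\sup_{k\ge0}\frac{(s/r)^k}{k!}\Big)q_r(f)\le e^{s/r}q_r(f)
$$
for any fixed $r<1/p$. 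For density, the Taylor partial sums $\sum_{k=0}^{N}d^kg(0)/k!$ of any $g\in\hbu(E)$ belong to $Exp_{\u}^p(E)$ (finitely many nonzero coefficients, all in the $\u_k(E)$) and converge to $g$ in $\hbu(E)$ because $p_s(g)<\infty$.

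Finally, for (c) I fix a countable dense set $\{\gamma_l\}_l\subset E'$ and take $\mathcal D$ to be the countable set of $(\mathbb Q+i\mathbb Q)$-linear combinations of finitely many of the polynomials $\gamma_l^{\,k}$; given $f\in Exp_{\u}^p(E)$, $r<1/p$ and $\varepsilon>0$, I first replace $f$ by a Taylor partial sum $P$ with $q_r(f-P)<\varepsilon/3$ as in (b), then approximate each homogeneous component of $P$ in $\u_k$-norm by a finite type polynomial (possible since finite type polynomials are $\|\cdot\|_{\u_k}$-dense in $\u_k(E)$ by hypothesis), and finally replace its functionals by nearby $\gamma_l$'s and its coefficients by rationals, which is legitimate because $\gamma\mapsto\gamma^{\,k}$ is continuous from $E'$ into $\u_k(E)$. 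For (d), given $\varepsilon>0$ I set $r_0=1/(p+\varepsilon)<1/p$ and estimate
$$
|f(x)|\le\sum_{k\ge0}\frac{\|d^kf(0)\|_{\u_k}}{k!}\|x\|^k=\sum_{k\ge0}\big(r_0^k\|d^kf(0)\|_{\u_k}\big)\frac{(\|x\|/r_0)^k}{k!}\le C_\varepsilon\,e^{(p+\varepsilon)\|x\|},
$$
where $C_\varepsilon:=\sup_k r_0^k\|d^kf(0)\|_{\u_k}$ is finite because it bounds the terms of the convergent series $q_{r_0}(f)$. The one step that is more than a one-line estimate is the continuity of $\gamma\mapsto\gamma^{\,k}\colon E'\to\u_k(E)$ used in (c); this follows from the ideal axioms by factoring $\gamma^{\,k}-\delta^{\,k}$ through a linear map into $(\mathbb C^2,\|\cdot\|_\infty)$ and using that every $k$-homogeneous polynomial on $\mathbb C^2$ lies in $\u_k(\mathbb C^2)$.
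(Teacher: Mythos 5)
The paper itself gives no proof of this proposition---it is stated with the remark that ``their proof is standard''---so there is no argument of the authors to compare yours with; what you wrote supplies exactly the kind of routine verification they had in mind, and it is essentially correct. Items (a), (b) and (d) are fine as written: the geometric-series comparison, the coefficientwise completeness argument with the Fatou estimate and part (a), the inequality $p_s(f)\le e^{s/r}q_r(f)$, density of Taylor partial sums, and the bound $|f(x)|\le C_\varepsilon e^{\|x\|/r_0}$ with $r_0=1/(p+\varepsilon)$ all work; and the auxiliary fact $\|P\|_{\p(^kE)}\le\|P\|_{\u_k(E)}$ does follow from axioms (ii) and (iii) by composing $P$ with the maps $\lambda\mapsto\lambda z$, $\|z\|=1$.

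The one place where your sketch is thinner than it should be is the continuity of $\gamma\mapsto\gamma^k$ from $E'$ to $\u_k(E)$ used in (c). The single factorization $\gamma^k-\delta^k=Q\circ T$ with $T=(\gamma,\delta):E\to\C^2$ and $Q(z,w)=z^k-w^k$ only yields $\|\gamma^k-\delta^k\|_{\u_k(E)}\le\|Q\|_{\u_k(\C^2)}\max(\|\gamma\|,\|\delta\|)^k$, which does not become small as $\delta\to\gamma$. You need an additional scaling: with $\epsilon=\|\gamma-\delta\|$, take $T(x)=\bigl(\gamma(x),\epsilon^{-1}(\gamma-\delta)(x)\bigr)$ and $Q_\epsilon(z,w)=z^k-(z-\epsilon w)^k=\sum_{j\ge1}\binom{k}{j}(-1)^{j+1}\epsilon^j w^j z^{k-j}$, so that $\|T\|\le\max(\|\gamma\|,1)$ stays bounded while $\|Q_\epsilon\|_{\u_k(\C^2)}=O(\epsilon)$; here one uses, as you indicate, that $\u_k(\C^2)=\p(^k\C^2)$, since powers of linear functionals belong to the ideal by axioms (ii)--(iii) and they span all $k$-homogeneous polynomials in finitely many variables (alternatively, estimate each term $(\gamma-\delta)^j\delta^{k-j}$ separately by the same scaling device, giving $\|\gamma^k-\delta^k\|_{\u_k(E)}\le\sum_{j\ge1}\binom{k}{j}c_{j,k}\|\gamma-\delta\|^j\|\delta\|^{k-j}$). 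With this small correction the separability argument, and hence the whole proof, is complete.
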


In order to prove frequent hypercyclicity of convolution operators on $Exp_{\u}^p(E)$, we need to introduce
some structure on the sequence of polynomial ideals.

\begin{definition}\rm
Let $\u=\{\u_k\}_k$ a holomorphy type and let $E$ be
a Banach space. We say that $\u$ is weakly differentiable at $E$ if there
exist constants $c_{k,l}>0$ such that, for  $0 \le l \le k$,  $P\in\u_k(E)$ and $\varphi\in\u_{k-l}(E)'$,  the mapping
$x\mapsto\varphi(P_{x^l})$ belongs to $\u_l(E)$ and
$$\Big\|x\mapsto\varphi\big(P_{x^l}\big)\Big\|_{\u_l(E)} \le c_{k,l}\|\varphi\|_{\u_{k-l}(E)'}\|P\|_{\u_k(E)}.$$
\end{definition}

\begin{remark}\label{stirling}\rm
In the following, we will assume that
\begin{equation}\label{constantes}
 c_{k,l} \le\frac{(k+l)^{k+l}}{(k+l)!}\frac{k!}{k^k}\frac{l!}{l^l} \mbox{ for every } k,l.
\end{equation}
Stirling's Formula states that $e^{-1}n^{n+1/2}\le e^{n-1}n!\le
n^{n+1/2}$ for every $n\ge1$, so given $\varepsilon>0$, there exists a positive constant $c_\varepsilon$, such that
\begin{equation*}\label{eq stirling}
c_{k,l}\le e^2\Big(\frac{kl}{k+l}\Big)^{1/2} \le c_\varepsilon(1+\varepsilon)^k,
\end{equation*}
for every $0 \le l \le k$.
\end{remark}
\begin{remark}\label{w-dif dual a mult}\rm
Weak differentiability is a condition that is stronger than being a holomorphy type and was defined in
\cite{CarDimMurCOLL}. All  the spaces of entire functions appearing in Example \ref{ejemplos htype} are
constructed with weakly differentiable holomorphy types satisfying (\ref{constantes}), see
\cite{CarDimMurCOLL,Mur12}. The concept of weak differentiability is closely related to that
of $\alpha$-$\beta$-$\gamma$-holomorphy types in \cite{Din71(holomorphy-types)} and that of
$\pi_1$-$\pi_2$-holomorphy types in \cite{FavJat09,BerBotFavJat13}.
\end{remark}

\begin{proposition}\label{restriccion_exp}
Let $p$ be a positive number, $\u=\{\u_k\}_k$ a holomorphy type and let $E$ be a Banach. Suppose that $\u$ is weakly differentiable with constants $c_{k,l}$ satisfying (\ref{constantes}). Then every convolution operator on $\hbu(E)$, restricts to  a convolution operator on $Exp_{\u}^p(E)$.
\end{proposition}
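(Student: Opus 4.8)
The plan is to reduce everything to a single quantitative estimate and then read off the three things that must be checked: that $T$ maps $Exp_{\u}^p(E)$ into itself, that it is continuous there, and that it still commutes with translations. By Proposition~\ref{convolution} write $T(f)=\varphi\ast f$ with $\varphi\in\hbu(E)'$ fixed. Continuity of $\varphi$ yields $s,C>0$ with $|\varphi(g)|\le C\,p_s(g)$ for every $g\in\hbu(E)$; applied to an $m$-homogeneous $Q\in\u_m(E)$ this reads $|\varphi(Q)|\le C s^m\|Q\|_{\u_m(E)}$, i.e. the restriction of $\varphi$ to $\u_m(E)$ has dual norm at most $Cs^m$. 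The estimate I will establish is: for every $0<r<1/p$ there exist $r'$ with $r<r'<1/p$ and a constant $C'>0$ such that $q_r(Tf)\le C'\,q_{r'}(f)$ for all $f\in Exp_{\u}^p(E)$. Since $Tf\in\hbu(E)$ (because $f\in Exp_{\u}^p(E)\subset\hbu(E)$ and $T$ already acts on $\hbu(E)$), the coefficient formula below shows $d^j(Tf)(0)\in\u_j(E)$ for every $j$, and the estimate gives $q_r(Tf)<\infty$ for every $r<1/p$; by the characterization of $Exp_{\u}^p(E)$ in part (a) of the previous proposition this means $Tf\in Exp_{\u}^p(E)$, and the displayed inequality is exactly continuity of $T$ on $(Exp_{\u}^p(E),\{q_r\}_{r<1/p})$.

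For the estimate, expand $f=\sum_{k\ge0}f_k$ with $f_k=d^kf(0)/k!$. Using that $\tau_x$ is continuous on $\hbu(E)$ and that $f_k(x+y)=\sum_{j=0}^{k}\binom{k}{j}(f_k)_{x^j}(y)$, applying $\varphi$ term by term gives
$$Tf(x)=\varphi(\tau_xf)=\sum_{k\ge0}\;\sum_{j=0}^{k}\binom{k}{j}\,\varphi\big((f_k)_{x^j}\big).$$
Because $\u$ is a holomorphy type, $(f_k)_{x^j}\in\u_{k-j}(E)$ by (\ref{htype}), and because $\u$ is weakly differentiable the map $x\mapsto\varphi\big((f_k)_{x^j}\big)$ belongs to $\u_j(E)$ with $\u_j$-norm at most $c_{k,j}\,C\,s^{k-j}\,\|f_k\|_{\u_k(E)}$. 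Each summand is thus a $j$-homogeneous polynomial lying in $\u_j(E)$, hence an element of $Exp_{\u}^p(E)$; the estimate worked out next shows that the double series above converges absolutely in every seminorm $q_r$, $r<1/p$, so it converges in the Fr\'echet space $Exp_{\u}^p(E)$, necessarily to $Tf$, and collecting the $j$-homogeneous terms identifies the Taylor coefficients of $Tf$ as
$$\frac{d^j(Tf)(0)}{j!}=\sum_{k\ge j}\binom{k}{j}\big[x\mapsto\varphi\big((f_k)_{x^j}\big)\big]\in\u_j(E).$$

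The rest is bookkeeping. From $j!\binom{k}{j}\|f_k\|_{\u_k}=\|d^kf(0)\|_{\u_k}/(k-j)!$ one obtains
$$\|d^j(Tf)(0)\|_{\u_j}\le C\sum_{k\ge j}\frac{c_{k,j}\,s^{k-j}}{(k-j)!}\,\|d^kf(0)\|_{\u_k},$$
and then, multiplying by $r^j$ and interchanging the order of summation,
$$q_r(Tf)\le C\sum_{k\ge0}\|d^kf(0)\|_{\u_k}\sum_{j=0}^{k}\frac{c_{k,j}\,s^{k-j}}{(k-j)!}\,r^j.$$
By Remark~\ref{stirling} — which is where hypothesis (\ref{constantes}) enters — one has $c_{k,j}\le c_\varepsilon(1+\varepsilon)^k$ for $0\le j\le k$, and since $\sum_{i\ge0}(s/r)^i/i!=e^{s/r}$ the inner sum is bounded by $c_\varepsilon\,e^{s/r}\big((1+\varepsilon)r\big)^k$, whence
$$q_r(Tf)\le C\,c_\varepsilon\,e^{s/r}\,q_{(1+\varepsilon)r}(f).$$
Given $0<r<1/p$, pick $\varepsilon>0$ so small that $(1+\varepsilon)r<1/p$ and put $r'=(1+\varepsilon)r$; as $f\in Exp_{\u}^p(E)$ we have $q_{r'}(f)<\infty$, and the estimate is proved.

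Finally, to conclude that $T|_{Exp_{\u}^p(E)}$ is again a convolution operator it suffices to observe that every translation $\tau_x$ is itself a convolution operator on $\hbu(E)$ — indeed $\tau_xf=\delta_x\ast f$ with $\delta_x\in\hbu(E)'$ — so by the part just proved $\tau_x$ restricts to a continuous operator on $Exp_{\u}^p(E)$; the identity $T\tau_x=\tau_xT$ holds on $\hbu(E)\supset Exp_{\u}^p(E)$ and hence on the subspace. The one genuinely delicate step is the estimate of the third paragraph: one must keep careful track of how the binomial coefficients, the weak-differentiability constants $c_{k,j}$, the powers $s^{k-j}$, and the factor $1/k!$ hidden in $f_k=d^kf(0)/k!$ interact, and the Stirling-type bound $c_{k,j}\le c_\varepsilon(1+\varepsilon)^k$ is exactly what makes the unavoidable loss absorbable into the arbitrarily small dilation $r\mapsto(1+\varepsilon)r$, so that $r'$ can still be kept below $1/p$; the same bound is also what justifies the rearrangement of the double series used to read off the Taylor coefficients of $Tf$.
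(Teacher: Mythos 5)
Your proof is correct and follows essentially the same route as the paper: the same expansion $Tf(x)=\sum_{k}\sum_{j\le k}\binom{k}{j}\varphi\bigl((f_k)_{x^j}\bigr)$, the same weak-differentiability estimate with $\|\varphi\|_{\u_{k-j}'}\le Cs^{k-j}$, and the same use of the Stirling bound $c_{k,j}\le c_\varepsilon(1+\varepsilon)^k$ to arrive at $q_r(Tf)\le C\,c_\varepsilon e^{s/r}q_{(1+\varepsilon)r}(f)$ with $(1+\varepsilon)r<1/p$. The only difference is that you spell out points the paper leaves implicit (continuity of the restricted operator, identification of the Taylor coefficients, and that the restriction still commutes with translations), which is fine.
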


\begin{proof}
Let $T:\hbu(E)\to\hbu(E)$ be a convolution operator and $\varphi\in \hbu(E)'$ such that $Tf=\varphi \ast f$. Suppose that $f=\sum_{k\in\N_0} P_k$ is in $Exp_{\u}^p(E)$. We need to prove that for $r<1/p$
$$
q_r(\varphi\ast f)=\sum_{l=0}^\infty r^l\|d^l(\varphi\ast f)(0)\|_{\u_l}<\infty.
$$

Note that
$$\varphi\ast f(x) = \varphi(\tau_x f) = \varphi\left( \sum_{k=0}^\infty \sum_{l=0}^k  \binom{k}{l}
(P_k)_{x^l} \right) = \sum_{l=0}^\infty \sum_{k=l}^\infty \binom{k}{l} \varphi \left((P_k)_{x^l}\right).
$$
This implies that
$$
d^l(\varphi\ast f)(0)(x)=l!\sum_{k=l}^{\infty}\binom{k}{l}\varphi((P_k)_{x^l}).
$$

Since $\varphi$ is a continuous linear functional, there are positive constants $c$ and $M$ such that $\|\varphi\|_{\u_{k-l}'}\leq c M^{k-l}.$
Thus, given $\varepsilon>0$ such that $r(1+\varepsilon)<1/p$, by the above remark,

\begin{align*}
q_r(\varphi\ast f) = \sum_{l=0}^\infty r^l\|d^l(\varphi\ast f)(0)\|_{\u_l}&\leq \sum_{l=0}^\infty r^l l! \sum_{k=l}^{\infty}\binom{k}{l}\|x\mapsto
\varphi((P_k)_{x^l})\|_{\u_k}\\
&\leq \sum_{l=0}^\infty r^l l! \sum_{k=l}^{\infty}\binom{k}{l} c_{k,\,l}\|\varphi\|_{\u_{k-l}'}\|P_k\|_{\u_k} \\
&\leq c\sum_{k=0}^\infty \frac{\|d^kf(0)\|_{\u_k}}{k!}\sum_{l=0}^k\binom{k}{l} c_{k,\,l} \, r^l \, l! \, M^{k-l} \\
&\leq c\sum_{k=0}^\infty \|d^kf(0)\|_{\u_k} r^k c_\varepsilon(1+\varepsilon)^k\sum_{l=0}^k
\frac{\left(\frac{M}{r}\right)^{k-l}}{(k-l)!}\\
&\leq c\, c_\varepsilon\, e^{(M/r)}\sum_{k=0}^\infty \|d^kf(0)\|_{\u_k}(r(1+\varepsilon))^k\\
&=c\, c_\varepsilon \, e^{(M/r)}\, q_{r(1+\varepsilon)}(f)<\infty.
\end{align*}
\end{proof}

\begin{remark}\rm
For $\gamma\in E'$, we have $d^k(e^\gamma)(0)=\gamma^k$, and then, since $\|\gamma^k\|_{\u_k}= \| \gamma \|^k$,
$$
\limsup_{k\to\infty}\|d^ke^\gamma(0)\|_{\u_k}^{1/k}=\|\gamma\|_{E'}.
$$
This implies that $e^\gamma\in Exp_\u^p(E)$ if and only if $\|\gamma\| \le p$.
Thus, for $\varphi\in Exp_\u^p(E)'$, we can define the Borel transform $\beta(\varphi)(\gamma)=\varphi(e^\gamma)$, for all $\gamma\in E'$ with $\|\gamma\| \le p$. Moreover, the function $\beta(\varphi)$ is holomorphic on the set $p\, B_{E\,'}$.
\end{remark}

The next proposition is the analogue   of Proposition \ref{beta_inyectiva} for the Borel transform restricted
to $Exp_{\u}^p(E)$.

\begin{proposition}\label{beta_inyectiva_exp}
Let $\u=\{\u_k\}_k$ be a holomorphy type and $E$ a  Banach space such that finite type polynomials are dense
in $\u_k(E)$ for every $k$. Then the Borel transform $\beta:Exp_{\u}^p(E)'\to \mathcal H(p B_{E'})$ is an injective
linear transformation.
\end{proposition}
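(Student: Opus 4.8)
The plan is to follow the scheme behind Proposition~\ref{beta_inyectiva}, peeling off the homogeneous components of $\varphi$ one degree at a time, with the open ball $pB_{E'}$ playing the role that $E'$ played there. Linearity of $\beta$ is immediate, so the only thing to show is that $\beta(\varphi)=0$ implies $\varphi=0$ for $\varphi\in Exp_{\u}^p(E)'$.

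First I would record how such a $\varphi$ decomposes. For each $k$ the Banach space $\u_k(E)$ sits inside $Exp_{\u}^p(E)$ as the subspace of $k$-homogeneous polynomials, and on that subspace $q_r(P)=r^k\|P\|_{\u_k}$; hence, picking $C>0$ and $r<1/p$ with $|\varphi(\cdot)|\le C q_r(\cdot)$, the restriction $\Phi_k:=\varphi|_{\u_k(E)}$ is a continuous functional $\Phi_k\in\u_k(E)'$ with $\|\Phi_k\|\le C r^k$. Moreover $\varphi$ is determined by the family $(\Phi_k)_k$: given $f\in Exp_{\u}^p(E)$ the partial sums $\sum_{k\le N} d^kf(0)/k!$ converge to $f$ in $Exp_{\u}^p(E)$, since $\sum_k r^k\|d^kf(0)\|_{\u_k}<\infty$ for every $r<1/p$, and applying $\varphi$ gives $\varphi(f)=\sum_{k=0}^{\infty}\Phi_k\big(d^kf(0)/k!\big)$. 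So it suffices to prove that every $\Phi_k$ vanishes.

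To read off $\Phi_k$ from $\beta(\varphi)$ I would slice by complex lines through the origin. Fix $\gamma\in E'$ with $\|\gamma\|<p$ and put $g_\gamma(t):=\beta(\varphi)(t\gamma)=\varphi(e^{t\gamma})$, a holomorphic function of the single variable $t$ on the disc $\{|t|<p/\|\gamma\|\}$. Using $\|\gamma^k\|_{\u_k}=\|\gamma\|^k$, the series $\sum_k \frac{t^k}{k!}\gamma^k$ converges absolutely to $e^{t\gamma}$ in $Exp_{\u}^p(E)$ on that disc, so $g_\gamma(t)=\sum_{k=0}^{\infty}\frac{\Phi_k(\gamma^k)}{k!}\,t^k$, and this is the Maclaurin expansion of $g_\gamma$. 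The hypothesis $\beta(\varphi)=0$ forces $g_\gamma\equiv 0$, hence $\Phi_k(\gamma^k)=0$ for all $\gamma$ with $\|\gamma\|<p$, and then for all $\gamma\in E'$ by homogeneity of $\gamma\mapsto\Phi_k(\gamma^k)$. By linearity $\Phi_k$ vanishes on $\mathrm{span}\{\gamma^k:\gamma\in E'\}=\PP_f(^kE)$, which is dense in $\u_k(E)$ by hypothesis; continuity of $\Phi_k$ then gives $\Phi_k=0$. Since this holds for every $k$, the previous paragraph yields $\varphi=0$, and $\beta$ is injective.

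There is essentially no serious obstacle here, as this is just the $Exp_{\u}^p$-analogue of Proposition~\ref{beta_inyectiva}. The two points needing a little care are the convergence of $\sum_k \frac{t^k}{k!}\gamma^k$ to $e^{t\gamma}$ in the Fr\'echet topology of $Exp_{\u}^p(E)$ — routine from the explicit seminorms $q_r$ together with $\|\gamma^k\|_{\u_k}=\|\gamma\|^k$ — and the identification of $\sum_k \frac{\Phi_k(\gamma^k)}{k!}t^k$ as the genuine power series of $g_\gamma$; reducing to one complex variable makes this last step immediate and sidesteps having to argue directly that $\gamma\mapsto\varphi(\gamma^k)$ is a continuous homogeneous polynomial on $E'$.
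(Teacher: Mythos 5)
Your proof is correct and is essentially the argument the paper intends but omits: the proposition is stated without proof as the analogue of Proposition \ref{beta_inyectiva}, and your route (from $\beta(\varphi)=0$, the power series of $t\mapsto\varphi(e^{t\gamma})$ on $|t|<p/\|\gamma\|$ gives $\Phi_k(\gamma^k)=0$ for all $k$ and $\gamma$, hence $\Phi_k$ vanishes on $\PP_f(^kE)$ and, by density and continuity, on $\u_k(E)$, and then $\varphi=0$ because Taylor series converge in the $q_r$-topology) is the standard one. The only slip is cosmetic: for a $k$-homogeneous $P$ one has $q_r(P)=r^k\,k!\,\|P\|_{\u_k}$ rather than $r^k\|P\|_{\u_k}$, since $d^kP(0)=k!\,P$; this changes nothing, as you only need the continuity of $\Phi_k$ on $\u_k(E)$.
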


Now, we can restate Lemma \ref{eigen_densos}, for the space $Exp_{\u}^p(E)$. Its proof is similar.

\begin{lemma}\label{eigen_densos_exp}
Let $p$ be a positive number, let $E$ be a Banach space with separable dual and let $\u$ be a holomorphy type such that finite type polynomials are dense in $\u_k(E)$ for every $k$. Let $\phi\in \mathcal H(pB_{E'})$ not constant
and $B\subset \C$. Suppose that there exist $\gamma_0\in p\,B_{E'}$ such that $\phi(\gamma_0)$ is an accumulation point of $B$. Then
$span\{e^\gamma: \, \|\gamma\|<p,\, \phi(\gamma)\in B\}$ is dense in $Exp_{\u}^p(E)$.
\end{lemma}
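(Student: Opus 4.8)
The plan is to run the argument of Lemma~\ref{eigen_densos} essentially verbatim, replacing Proposition~\ref{beta_inyectiva} by Proposition~\ref{beta_inyectiva_exp} and replacing complex lines by their traces on the open ball $pB_{E'}$. Concretely, let $\Phi\in Exp_\u^p(E)'$ vanish on $\{e^\gamma:\|\gamma\|<p,\ \phi(\gamma)\in B\}$; since $e^\gamma\in Exp_\u^p(E)$ whenever $\|\gamma\|<p$, this exactly says that the holomorphic function $\beta(\Phi)\in\mathcal H(pB_{E'})$ vanishes on $\phi^{-1}(B)=\{\gamma\in pB_{E'}:\phi(\gamma)\in B\}$. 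By Proposition~\ref{beta_inyectiva_exp} and the Hahn--Banach theorem, it suffices to show $\beta(\Phi)\equiv 0$ on $pB_{E'}$.

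Next I would produce a countable family of complex lines $L_k$ through $\gamma_0$ such that $\phi$ is non-constant on each trace $L_k\cap pB_{E'}$ and $\bigcup_k\bigl(L_k\cap pB_{E'}\bigr)$ is dense in $pB_{E'}$. Since $E'$ is separable, $pB_{E'}$ has a countable basis $\{U_k\}_k$ of non-empty open sets, so it is enough to find, for each $k$, a complex line through $\gamma_0$ that meets $U_k$ and on which $\phi$ is non-constant. First note that there is at least one complex line through $\gamma_0$ on which $\phi$ is non-constant: otherwise $\phi$ would be locally constant at $\gamma_0$, hence constant on the connected open set $pB_{E'}$ by the identity principle. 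Fix such a direction $v$ and fix $\eta\in U_k$, and consider the lines $L_t=\gamma_0+\mathbb C(\eta-\gamma_0+tv)$. For $|t|$ small, $L_t$ still meets $U_k$ (it passes through a point close to $\eta$, or through $\gamma_0$ if $\gamma_0\in U_k$); moreover, the $n_0$-th derivative at $z=0$ of $z\mapsto\phi(\gamma_0+z(\eta-\gamma_0+tv))$ is a polynomial in $t$ whose coefficient of $t^{n_0}$ equals $d^{n_0}\phi(\gamma_0)(v^{(n_0)})$, and choosing $n_0$ so that this is non-zero (possible because $\phi$ is non-constant on $\gamma_0+\mathbb Cv$) makes that polynomial non-zero, hence with finitely many roots. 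Picking a small $t$ avoiding those roots gives a line $L_k:=L_t$ meeting $U_k$ on which $\phi$ is non-constant. Since every non-empty open subset of $pB_{E'}$ contains some $U_k$, the union of the traces is dense.

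Finally, fix $k$ and argue as in Lemma~\ref{eigen_densos}: on the connected open planar domain $L_k\cap pB_{E'}$ the function $\phi$ is non-constant, hence an open map near $\gamma_0$, so, using that $\phi(\gamma_0)$ is an accumulation point of $B$, the point $\gamma_0$ is an accumulation point of $\phi^{-1}(B)\cap L_k$. The one-variable holomorphic function $z\mapsto\beta(\Phi)(\gamma_0+zv_k)$ (where $L_k=\gamma_0+\mathbb Cv_k$) then vanishes on a set with an accumulation point lying inside its domain, so it vanishes identically on $L_k\cap pB_{E'}$. Since $\bigcup_k(L_k\cap pB_{E'})$ is dense and $\beta(\Phi)$ is continuous, $\beta(\Phi)\equiv 0$, whence $\Phi=0$ by Proposition~\ref{beta_inyectiva_exp} and the span is dense in $Exp_\u^p(E)$. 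The only difference from Lemma~\ref{eigen_densos} is bookkeeping --- the ambient set is the open ball rather than all of $E'$, so the complex lines must be intersected with $pB_{E'}$ --- and this is harmless because the relevant traces are connected and contain $\gamma_0$, so both the identity principle and the open mapping theorem apply just as before. I therefore expect no genuine obstacle; the only step requiring care is the construction of the dense family of lines through $\gamma_0$ on which $\phi$ is non-constant, which is precisely the point left implicit in the proof of Lemma~\ref{eigen_densos}.
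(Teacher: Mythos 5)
Your proposal is correct and is essentially the paper's own argument: the paper simply states that the proof is ``similar'' to that of Lemma~\ref{eigen_densos}, and your adaptation (Proposition~\ref{beta_inyectiva_exp} in place of Proposition~\ref{beta_inyectiva}, lines intersected with $pB_{E'}$, identity principle and open mapping theorem on the connected traces through $\gamma_0$) is exactly that adaptation. Your explicit perturbation construction of the lines $L_k$ through $\gamma_0$ meeting each basic open set with $\phi$ non-constant on them correctly fills in the step the paper leaves implicit.
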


Now we are able to prove that for non-trivial convolution operators on $\hbu(E)$ there exist frequently hypercyclic entire function satisfying certain exponential growth conditions.

Given a non-trivial convolution operator $T$ defined on $\hbu(E)$, let us define
\[
\alpha_T=\inf\{\|\gamma\|,\,\gamma\in E'\text{ such that } |T(e^\gamma)(0)|=1\}.
\]

\begin{theorem}\label{ultimo}
Let $\u=\{\u_k\}_k$ be a holomorphy type and let $E$ be a Banach space with separable dual such that
finite type polynomials are dense in $\u_k(E)$ for every $k$. Suppose that $\u$ is weakly differentiable with
constants $c_{k,l}$ satisfying (\ref{constantes}). Let $T:\hbu(E)\to\hbu(E)$ be a non-trivial convolution
operator. Then, for any $\varepsilon>0$, $T$ admits a frequent hypercyclic function $f \in Exp_{\u}^{\alpha_T+\varepsilon}(E)$.
\end{theorem}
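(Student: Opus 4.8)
The plan is to prove frequent hypercyclicity on the smaller space $Exp_{\u}^{p}(E)$ with $p=\alpha_T+\varepsilon$, and then push it back to $\hbu(E)$ along the continuous inclusion. Since $\u$ is weakly differentiable with constants satisfying \eqref{constantes}, Proposition \ref{restriccion_exp} tells us $T$ restricts to a convolution operator $T_p:=T|_{Exp_{\u}^{p}(E)}$, and by the properties of $Exp_{\u}^{p}(E)$ collected above (continuous dense embedding in $\hbu(E)$, and separability under our hypotheses on $E'$ and $\u$) the space $Exp_{\u}^{p}(E)$ is a complex separable Fr\'echet space. As in the proof of the strong mixing theorem, writing $\varphi=\delta_0\circ T\in\hbu(E)'$ we have $T(e^\gamma)=\beta(\varphi)(\gamma)\,e^\gamma$ for all $\gamma\in E'$, and $\beta(\varphi)$ is a non-constant entire function on $E'$ because $T$ is non-trivial.

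The core of the argument is to verify the Bayart--Matheron criterion (Theorem \ref{Bayart_Matheron}) for $T_p$ on $X=Exp_{\u}^{p}(E)$. Fix $D\subset\T$ with $\T\setminus D$ dense in $\T$. Recall (from the remark above) that $e^\gamma\in Exp_{\u}^{p}(E)$ exactly when $\|\gamma\|\le p$; hence every $e^\gamma$ with $\|\gamma\|<p$ and $\beta(\varphi)(\gamma)\in\T\setminus D$ lies in $\mathrm{span}\bigcup_{\lambda\in\T\setminus D}\ker(T_p-\lambda)$. To obtain density of this span I would apply Lemma \ref{eigen_densos_exp} with $\phi=\beta(\varphi)|_{pB_{E'}}$ and $B=\T\setminus D$: the map $\phi$ is non-constant on $pB_{E'}$ by the identity principle (as $\beta(\varphi)$ is non-constant on $E'$), and since $\T$ is perfect and $\T\setminus D$ dense in it, \emph{every} point of $\T$ is an accumulation point of $\T\setminus D$. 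So it remains only to exhibit $\gamma_0\in pB_{E'}$ with $|\beta(\varphi)(\gamma_0)|=1$.

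This is exactly where the definition of $\alpha_T$ and the strict gap $p=\alpha_T+\varepsilon>\alpha_T$ enter, and it is the one point that needs care. Exactly as in the proof of the strong mixing theorem, the open sets $\{\gamma:|\beta(\varphi)(\gamma)|<1\}$ and $\{\gamma:|\beta(\varphi)(\gamma)|>1\}$ are both non-empty (otherwise $\beta(\varphi)$ or $1/\beta(\varphi)$ would be a non-constant bounded entire function), so by arcwise connectedness of $\beta(\varphi)(E')$ the set $\{\gamma\in E':|\beta(\varphi)(\gamma)|=1\}$ is non-empty; thus $\alpha_T<\infty$ and, by definition of the infimum, there is $\gamma_0\in E'$ with $\|\gamma_0\|<\alpha_T+\varepsilon=p$ and $|T(e^{\gamma_0})(0)|=|\beta(\varphi)(\gamma_0)|=1$. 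The subtlety is precisely that Lemma \ref{eigen_densos_exp} requires $\gamma_0$ in the \emph{open} ball $pB_{E'}$ (the Borel transform on $Exp_{\u}^{p}(E)$ is only holomorphic there), which is why $\varepsilon$ cannot be taken to be $0$. With this $\gamma_0$ in hand, Lemma \ref{eigen_densos_exp} yields that $\mathrm{span}\{e^\gamma:\|\gamma\|<p,\ \beta(\varphi)(\gamma)\in\T\setminus D\}$ is dense in $Exp_{\u}^{p}(E)$, so the hypothesis of Theorem \ref{Bayart_Matheron} is met and $T_p$ is strongly mixing in the gaussian sense, in particular frequently hypercyclic, on $Exp_{\u}^{p}(E)$.

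Finally, let $f\in Exp_{\u}^{p}(E)$ be a frequently hypercyclic vector for $T_p$. Given any non-empty open $U\subseteq\hbu(E)$, density of $Exp_{\u}^{p}(E)$ in $\hbu(E)$ makes $U\cap Exp_{\u}^{p}(E)$ non-empty, and continuity of the inclusion makes it open in $Exp_{\u}^{p}(E)$; since $\{n:T_p^nf\in U\cap Exp_{\u}^{p}(E)\}$ has positive lower density, so does $\{n:T^nf\in U\}$. Hence $f$ is a frequently hypercyclic function for $T$ on $\hbu(E)$ lying in $Exp_{\u}^{\alpha_T+\varepsilon}(E)$, which is the assertion. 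Apart from this last transfer, the proof is essentially the assembly of Proposition \ref{restriccion_exp}, Lemma \ref{eigen_densos_exp} and Theorem \ref{Bayart_Matheron}, the only genuinely new ingredient being the estimate $\|\gamma_0\|<p$ coming from the infimum defining $\alpha_T$.
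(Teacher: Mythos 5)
Your proof is correct and follows essentially the same route as the paper: restrict $T$ to $Exp_{\u}^{p}(E)$ via Proposition \ref{restriccion_exp}, produce a unimodular eigenvector $e^{\gamma_0}$ with $\|\gamma_0\|<p=\alpha_T+\varepsilon$, and apply Lemma \ref{eigen_densos_exp} together with Theorem \ref{Bayart_Matheron}. You merely make explicit two steps the paper leaves implicit (the non-emptiness of $\{\gamma:|\beta(\varphi)(\gamma)|=1\}$, recycled from the strong mixing theorem, and the transfer of frequent hypercyclicity back to $\hbu(E)$ through the dense continuous inclusion), which is fine.
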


\begin{proof}
Fix $\gamma_0\in E'$ such that $\alpha_T\leq \|\gamma_0\|<\alpha_T+\varepsilon$ and
$|T(e^{\gamma_0})(0)|=1$. Consider $p=\alpha_T+\varepsilon$. It is enough to prove that $T$ is
frequently hypercyclic on $Exp_\u^p(E)$.

Proposition \ref{restriccion_exp} allows us to restrict the operator $T$ to the space $Exp_{\u}^p(E)$. Since
$e^\gamma$ is an eigenvector of $T$ with eigenvalue $T(e^\gamma)(0)$,  it is
enough to show, by Theorem \ref{Bayart_Matheron}, that for every Borel set $D\subset\T$, such that $\T \setminus D$ is dense in $\T$, the linear span of $\{e^\gamma:\,\|\gamma\|<p,\, T(e^\gamma)(0)\in \T\setminus D\}$ is dense in
$Exp_{\u}^p(E)$.

We see as in Proposition \ref{convolution} that there exists $\varphi\in Exp_{\u}^p(E)'$ such that $Tf=\varphi\ast f$ for every $f\in Exp_{\u}^p(E)$. Then
$\beta(\varphi)\in \zH(p\,B_{E'})$ is not constant. Since $\T\setminus
D$ is dense in $\T$, $T(e^{\gamma_0})(0)=\beta(\varphi)(\gamma_0)$ is an accumulation point of $\T\setminus
D$. Thus, an application of Lemma \ref{eigen_densos_exp} proves that the linear span of $\{e^\gamma:\,\|\gamma\|<p,\,
\beta(\varphi)(\gamma)\in \T\setminus D\}$ is dense in
$Exp_{\u}^p(E)$.
\end{proof}

\section{Frequently hypercyclic subspaces and Examples}

Finally, we study the existence of frequently hypercyclic subspaces for a given non-trivial convolution
operator, that is, the existence of closed infinite-dimensional subspaces in which every non-zero vector is
frequently hypercyclic. We prove that there exists a frequently hypercyclic subspace for each
non-trivial convolution operator on $\hbu(E)$, if the dimension of $E$ is bigger than 1.

Lastly, we study exponential growth conditions for special cases of convolution operators such as translation and partially differentiation ones.

\subsection{Frequently hypercyclic subspaces} Given a frequently hypercyclic operator $T$ on a Fr\'echet space
$X$ with frequently hypercyclic vector $x\in X$, we can consider the linear subspace $\mathbb{K}[T]x$, whose
elements are the evaluations at $x$ of every polynomial on $T$. It turns out that $\mathbb{K}[T]x\setminus
\{0\}$ is contained on $FHC(T)$, the set of all frequently hypercyclic vectors of $T$, but in general
$\mathbb{K}[T]x$ is not closed in $X$. Then, it is natural to ask if there exists a {\it closed} subspace
$M\subset X$ such that $M\setminus \{0\}\subset FHC(T)$. Bonilla and Grosse-Erdmann, in \cite{BonGro12},
gave sufficient conditions for this situation to hold. First we state the Frequent Hypercyclicity Criterion.

\begin{theorem}[Frequent Hypercyclicity Criterion]\label{FHC}
Let $T$ be an operator on a separable F-space $X$. Suppose that there exists a dense subset $X_0$ of $X$ and a map $S : X_0 \to X_0$
such that, for all $x \in X_0$,
\begin{enumerate}
\item $\sum_{n=1}^\infty T^n x$ converges unconditionally,
\item $\sum_{n=1}^\infty S^n x$ converges unconditionally,
\item $T Sx = x$.
\end{enumerate}
Then $T$ is frequently hypercyclic.
\end{theorem}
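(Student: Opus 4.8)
The plan is to produce a single frequently hypercyclic vector $x\in X$ as a series of the form $x=\sum_{j}\sum_{n\in A_j}S^nx_j$, where $(x_j)$ is a suitable dense sequence and the $A_j$ are carefully spaced sets of positive lower density. First I would fix a countable dense subset $(x_j)_{j\ge 1}$ of $X$ with each $x_j\in X_0$ (possible since $X_0$ is dense and $X$ is separable and metrizable), arranged so that every value occurs infinitely often, together with a decreasing base $(U_p)_{p\ge 1}$ of balanced open neighbourhoods of $0$. From hypotheses (1) and (2) and the Cauchy criterion for unconditional convergence in a complete metrizable abelian group, for each $j$ and $p$ there is a threshold $N(j,p)\in\N$ such that $\sum_{n\in F}T^nx_j\in U_p$ and $\sum_{n\in F}S^nx_j\in U_p$ for every finite $F\subseteq[N(j,p),\infty)$.

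Next I would invoke the disjointification lemma of Bayart--Grivaux / Bonilla--Grosse-Erdmann: given any prescribed sequence of positive integers, there exist pairwise disjoint sets $A_j\subseteq\N$, each of positive lower density, such that $|n-m|$ exceeds the prescribed amount (depending on $\max$ of the two indices) whenever $n\in A_j$, $m\in A_k$ and $n\ne m$. I would apply it with a prescription chosen large enough to dominate the thresholds $N(\cdot,\cdot)$ in the manner dictated by the error estimate below; this is the one genuinely non-elementary ingredient.

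With the $A_j$ fixed, set $x:=\sum_{j\ge 1}\sum_{n\in A_j}S^nx_j$. Since all iterates stay in $X_0$, hypothesis (3) gives $T^nS^mx_k=x_k$ if $m=n$, $T^nS^mx_k=T^{n-m}x_k$ if $m<n$, and $T^nS^mx_k=S^{m-n}x_k$ if $m>n$. The gap condition then shows that the partial sums of the defining series are Cauchy, so $x$ is well defined; and for $n\in A_j$ it yields $T^nx=x_j+E_n$, where $E_n$ is a sum over pairs $(k,m)$ with $m\in A_k$, $m<n$, $(k,m)\ne(j,n)$ of tails $T^{n-m}x_k$ of the $T$-series, plus a sum over pairs with $m>n$ of tails $S^{m-n}x_k$ of the $S$-series. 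Because the gap condition forces each difference $n-m$ (resp. $m-n$) arising here to exceed the relevant threshold $N(k,\cdot)$, every one of these partial sums lies in a small neighbourhood of $0$; summing the bounds, with the prescription chosen so the resulting geometric series of errors converges, gives $E_n\in U_{\rho(j)}$ with $\rho(j)\to\infty$. Hence, for any $x_{j_0}$ and any $p$, one may pick $j$ with $x_j=x_{j_0}$ and $\rho(j)\ge p$, so that $A_j\subseteq\{n:T^nx\in x_{j_0}+U_p\}$; as $A_j$ has positive lower density, $x$ is a frequently hypercyclic vector for $T$.

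The main obstacle is the combinatorial lemma together with the bookkeeping required to make $E_n$ small \emph{uniformly over all of} $A_j$, not merely for large $n\in A_j$: the prescription fed into the disjointification lemma must be calibrated in terms of the thresholds $N(j,p)$ so that, simultaneously, the series defining $x$ converges, each error term $E_n$ is controlled with $\rho(j)\to\infty$, and the sets $A_j$ retain positive lower density.
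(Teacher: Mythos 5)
The paper does not actually prove this statement: Theorem \ref{FHC} is quoted as a known result (the Frequent Hypercyclicity Criterion of Bayart--Grivaux and Bonilla--Grosse-Erdmann, cf.\ \cite{BonGro06,GroPer11}), and the paper only uses it (and its variant, Proposition \ref{FHC modified}) as a black box. Your sketch is essentially a reconstruction of the standard proof of that criterion (as in \cite[Theorem 9.9]{GroPer11}): dense sequence in $X_0$ with repetitions, thresholds from the Cauchy criterion for unconditional convergence, the disjointification lemma producing well-separated sets of positive lower density, the vector $x=\sum_j\sum_{n\in A_j}S^nx_j$, and the identity $T^nS^mx_k=T^{n-m}x_k$, $x_k$, or $S^{m-n}x_k$ according to the sign of $n-m$. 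The outline is correct, with one point you should make explicit rather than leave to ``calibration'': the separation lemma you invoke must also provide a lower bound on the elements themselves, e.g.\ $n\ge j$ (or $n\ge N(j,j)$ after relabelling) for all $n\in A_j$, not merely lower bounds on $|n-m|$ for distinct $n,m$. Without this min-element condition the gap hypothesis alone does not make the block sums $\sum_{n\in A_j}S^nx_j$ small, so neither the convergence of the series defining $x$ nor the uniform smallness of the $S$-tails in $E_n$ follows; with it (as in \cite[Lemma 9.5]{GroPer11}, where the sets satisfy $n\ge j$ and $|n-m|\ge j+k$), your error bookkeeping, together with continuity of $T$ to interchange $T^n$ with the series, goes through and yields $A_j\subseteq\{n: T^nx\in x_{j_0}+U_p\}$ as claimed.
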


The Bonilla and Grosse-Erdmann theorem for the existence of a frequently hypercyclic subspace states that if
an operator $T$ satisfies the Frequent Hypercyclicity Criterion and admits an infinite number of linearly
independent eigenvectors, associated to an eigenvalue of modulus less than one then, there exists a frequently
hypercyclic subspace for $T$. Since we cannot assure that non-trivial convolution operators satisfy the
Frequent Hypercyclicity Criterion, Theorem \ref{FHC}, we need the following modified version which may be
found in \cite[Remark 9.10]{GroPer11}.
\begin{proposition}\label{FHC modified}
Let $T$ be an operator on a separable F-space $X$. Suppose that there exists a dense subset $X_0$ of $X$
and for any $x\in X_0$ there is a sequence $(u_n(x))_{n\ge 0}\subset X$
such that,
\begin{enumerate}
\item $\sum_{n=1}^\infty T^n x$ converges unconditionally,
\item $\sum_{n=1}^\infty u_n(x)$ converges unconditionally,
\item $T^ju_n(x) =u_{n-j}(x)$, for $j\le n$.
\end{enumerate}
Then $T$ is frequently hypercyclic.
\end{proposition}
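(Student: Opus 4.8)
The plan is to reproduce, almost verbatim, the proof of the Frequent Hypercyclicity Criterion (Theorem~\ref{FHC}), after observing that the only properties of the orbit $(S^{n}x)_{n\ge 0}$ used there are the relations $T^{j}(S^{n}x)=S^{n-j}x$ for $0\le j\le n$ together with $S^{0}x=x$ --- and these are exactly what hypothesis~(3) provides for the sequence $(u_{n}(x))_{n\ge 0}$, together with the normalization $u_{0}(x)=x$ (implicit here, and part of \cite[Remark 9.10]{GroPer11}); note in particular that one never needs $u_{n}(x)\in X_{0}$, so one cannot simply invoke Theorem~\ref{FHC} and must instead build a frequently hypercyclic vector $z$ by hand. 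To set up, I would fix a translation-invariant F-norm $\|\cdot\|$ generating the topology of $X$, a countable base $(V_{l})_{l\ge 1}$ of non-empty open sets, and for each $l$ a point $y_{l}\in X_{0}\cap V_{l}$ and a radius $\delta_{l}>0$ with $\overline{B}(y_{l},\delta_{l})\subseteq V_{l}$ (possible since $X_{0}$ is dense).

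Next, using the Cauchy criterion for the unconditional convergence of $\sum_{n}u_{n}(y_{k})$ and of $\sum_{n}T^{n}y_{k}$ (hypotheses (1)--(2) applied to $x=y_{k}\in X_{0}$), I would choose recursively a strictly increasing sequence of positive integers $(N_{l})_{l}$ so large that, for all $k\le l$ and every finite set $F\subseteq[N_{l},\infty)$,
$$\Big\|\sum_{n\in F}u_{n}(y_{k})\Big\|+\Big\|\sum_{n\in F}T^{n}y_{k}\Big\|<\varepsilon_{k,l},$$
where $(\varepsilon_{k,l})_{k\le l}$ is an array of positive numbers fixed in advance with $\sum_{k=1}^{l}\varepsilon_{k,l}+\sum_{k>l}\varepsilon_{k,k}<\delta_{l}$ for every $l$ (after a harmless shrinking of the $\delta_{l}$, such an array is easily exhibited). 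Then I would invoke the combinatorial lemma of Bayart and Grivaux underlying the Frequent Hypercyclicity Criterion (see \cite{GroPer11}): there exist pairwise disjoint sets $A_{l}\subseteq\N$, each of positive lower density, with $\min A_{l}\ge N_{l}$ and such that any two distinct $n\in A_{l}$ and $m\in A_{k}$ satisfy $|n-m|\ge N_{l}+N_{k}$. Since $A_{l}\subseteq[N_{l},\infty)$, every finite partial sum of $w_{l}:=\sum_{n\in A_{l}}u_{n}(y_{l})$ has F-norm $<\varepsilon_{l,l}$, so $w_{l}$ converges with $\|w_{l}\|\le\varepsilon_{l,l}$; by completeness of $X$ and subadditivity of $\|\cdot\|$, the series $z:=\sum_{l\ge 1}w_{l}=\sum_{l\ge 1}\sum_{n\in A_{l}}u_{n}(y_{l})$ then defines a genuine element of $X$.

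The last step would be to verify $z\in FHC(T)$. Fixing $l$ and $n\in A_{l}$ and pushing the continuous operator $T^{n}$ through the convergent double series, I would use hypothesis~(3) --- namely $T^{n}u_{n}(y_{l})=u_{0}(y_{l})=y_{l}$, $T^{n}u_{m}(y_{k})=u_{m-n}(y_{k})$ when $m>n$, and $T^{n}u_{m}(y_{k})=T^{n-m}\big(T^{m}u_{m}(y_{k})\big)=T^{n-m}y_{k}$ when $m<n$ --- to obtain
$$T^{n}z-y_{l}=\sum_{k\ge 1}\Bigg(\sum_{\substack{m\in A_{k}\\ m>n}}u_{m-n}(y_{k})+\sum_{\substack{m\in A_{k}\\ m<n}}T^{n-m}y_{k}\Bigg),$$
the contribution $m=n$ (present only for $k=l$) having produced exactly the isolated term $y_{l}$. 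For every $k$ and every $m$ occurring in the $k$-th bracket, the separation property gives $|m-n|\ge N_{k}+N_{l}\ge N_{\max(k,l)}$, so the indices $m-n$ (resp.\ $n-m$) appearing there form a finite subset of $[N_{\max(k,l)},\infty)$; hence that bracket has F-norm less than $\varepsilon_{k,l}$ if $k\le l$ and less than $\varepsilon_{k,k}$ if $k>l$. Summing, $\|T^{n}z-y_{l}\|\le\sum_{k\le l}\varepsilon_{k,l}+\sum_{k>l}\varepsilon_{k,k}<\delta_{l}$, so $T^{n}z\in\overline{B}(y_{l},\delta_{l})\subseteq V_{l}$. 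Thus $A_{l}\subseteq\{n\in\N:\ T^{n}z\in V_{l}\}$, a set of positive lower density; since $(V_{l})$ is a base of the topology, $z$ is a frequently hypercyclic vector and hence $T$ is frequently hypercyclic.

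I expect the only genuine obstacle to be the bookkeeping hidden in the second paragraph: the speed of unconditional convergence of the series attached to $y_{k}$ is not uniform in $k$, so the gap sequence $(N_{l})$ has to be built recursively to dominate, for the target $V_{l}$, all of those speeds at once, while the single error budget $\delta_{l}$ must be distributed over the countably many ``wrong'' base vectors $y_{k}$ as a convergent double sum. This is precisely the mechanism of the proof of the Frequent Hypercyclicity Criterion, so once the array $(\varepsilon_{k,l})$ and the sets $A_{l}$ have been fixed, the rest is a line-by-line transcription of that argument --- which is why the statement is simply quoted from \cite{GroPer11}.
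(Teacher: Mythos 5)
Your proposal is correct and is essentially the argument the paper itself relies on: the paper gives no proof of Proposition \ref{FHC modified}, citing it as \cite[Remark 9.10]{GroPer11}, and that remark consists precisely of the observation you make, namely that the Bayart--Grivaux proof of the Frequent Hypercyclicity Criterion only uses the relations $T^{j}u_{n}(x)=u_{n-j}(x)$ and $u_{0}(x)=x$ (which you rightly note must be read into the statement), so the standard construction of the frequently hypercyclic vector via the separated sets of positive lower density goes through verbatim. The only cosmetic blemishes are that the index sets with $m>n$ are infinite (handled, as you implicitly do, by passing from finite partial sums to the limit) and a harmless factor of $2$ in the bracket estimate since the two Cauchy bounds are applied to different finite sets; neither affects correctness.
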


Now, we can state the modified version of the Bonilla and Grosse-Erdmann theorem which will be used for the
proof
of Theorem \ref{FHS convolution}.
\begin{theorem}\label{subspacio}
Let $X$ be a separable F-space with a continuous norm and $T$ an operator on $X$ that satisfies the
hypotheses of Proposition \ref{FHC modified}. If $\dim  Ker(T - \lambda) =
\infty$ for some scalar $\lambda$ with $|\lambda|<1$ then $T$ has a frequently hypercyclic subspace.
\end{theorem}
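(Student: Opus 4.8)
The plan is to follow the construction of frequently hypercyclic subspaces of Bonilla and Grosse-Erdmann in \cite{BonGro12}, with the one change that the powers $S^n$ of the right inverse appearing in the classical Frequent Hypercyclicity Criterion are replaced throughout by the maps $u_n(\cdot)$ furnished by Proposition~\ref{FHC modified}. This substitution is harmless: the $u_n$ possess exactly the two features the construction uses --- the unconditional convergence of $\sum_{n\ge1}u_n(x)$ and the cancellation identity $T^j u_n(x)=u_{n-j}(x)$ for $j\le n$, with $u_0(x)=x$ --- so Proposition~\ref{FHC modified} may be invoked in place of Theorem~\ref{FHC} wherever the argument of \cite{BonGro12} (and of \cite[Remark~9.10]{GroPer11}) appeals to it.

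First I would isolate the ``contracting'' subspace. Put $M_0:=\ker(T-\lambda)$. As the kernel of the continuous operator $T-\lambda$ it is a closed subspace of $X$; it is infinite dimensional by hypothesis; and for every $v\in M_0$ one has $T^n v=\lambda^n v\to0$, since $|\lambda|<1$. Because $X$ is an F-space with a continuous norm $\|\cdot\|$, the infinite dimensional closed subspace $M_0$ contains a basic sequence $(e_k)_{k\ge1}$; the presence of the continuous norm is exactly what makes this selection, and the perturbation estimates below, legitimate.

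Now the main construction. Fix a sequence $(y_p)_{p\ge1}$ dense in $X_0$, together with tolerances $\delta_p>0$, chosen so that every nonempty open subset of $X$ contains some $y_p$ with $\delta_p$ as small as one pleases. Using the Bonilla--Grosse-Erdmann combinatorial lemma (pairwise disjoint subsets of $\N$, each of positive lower density, with a prescribed separation between their elements) one builds inductively a basic sequence $(f_k)_{k\ge1}$ of the form $f_k=e_k+h_k$, where $h_k$ is an unconditionally convergent combination $\sum_{p}\sum_{n\in I_{p,k}}u_n(y_p)$ whose index sets $I_{p,k}$ are sparse and start late enough that $h_k$ is small --- in $\|\cdot\|$ and in every defining seminorm of $X$ --- thanks to the unconditional convergence of $\sum_n u_n(y_p)$, while still $T^m h_k$ approximates $y_p$ within $\delta_p$ for $m$ in a positive-lower-density subset of $I_{p,k}$ (and $\|T^m h_k\|$ stays negligible for $m$ far from all of $h_k$'s windows). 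The inductive smallness of $h_k$ is arranged so that $(f_k)_{k\ge1}$ is a basic sequence whose coordinate functionals are uniformly bounded on $M:=\overline{\mathrm{span}}\{f_k:k\ge1\}$, a standard small-perturbation argument that uses the continuous norm. Finally, since $\lambda^m\to0$, the part of any $f\in M$ lying in $\overline{\mathrm{span}}\{e_k:k\ge1\}\subset M_0$ contributes a term $\lambda^m(\cdot)\to0$ under $T^m$, so it never interferes with these approximations.

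To finish, take any nonzero $f=\sum_k a_k f_k\in M$ and let $k_0$ be the least index with $a_{k_0}\neq0$; write $f=\sum_k a_k e_k+\sum_k a_k h_k$. Applying $T^m$, the first sum yields $\lambda^m\sum_k a_k e_k\to0$, while along the positive-lower-density set of $m$ attached to a target $y_p$ and to the index $k_0$, the second sum equals $a_{k_0}T^m h_{k_0}$ plus a tail that is negligible because the $|a_{k'}|$ are uniformly bounded, hence it lies within a controlled distance of $a_{k_0}y_p$. As the pairs $(y_p,\delta_p)$ were chosen so that every nonempty open set contains a target with arbitrarily small attached tolerance, and as multiplication by the fixed nonzero scalar $a_{k_0}$ is a homeomorphism of $X$, the orbit of $f$ meets every nonempty open set along a set of positive lower density; thus $f\in FHC(T)$, and $M$ is a closed infinite dimensional subspace contained, apart from the origin, in $FHC(T)$. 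I expect the real difficulty --- the place where the genuine work of \cite{BonGro12} lies --- to be the bookkeeping in the inductive construction: the index sets $I_{p,k}$ and the sizes of the various pieces must be fixed once and for all so that the approximation property holds for \emph{every} nonzero $f\in M$ simultaneously, with no advance knowledge of which coefficient is the first nonzero one, while $(f_k)$ is kept a basic sequence with uniformly bounded coordinate functionals. Everything else is a routine transcription of \cite{BonGro12}, using Proposition~\ref{FHC modified} instead of the Frequent Hypercyclicity Criterion.
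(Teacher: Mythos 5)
Your proposal is correct and takes essentially the same route as the paper: the paper's proof consists precisely of the observation that the Bonilla--Grosse-Erdmann construction of frequently hypercyclic subspaces in \cite{BonGro12} goes through once $S^n y_j$ is replaced by $u_n(y_j)$ in their key Lemma 1, which is exactly the substitution you make, justified by the same two properties (unconditional convergence of $\sum_n u_n(x)$ and $T^j u_n(x)=u_{n-j}(x)$). Your expanded sketch of that construction --- basic sequence in $\ker(T-\lambda)$ supplied by the continuous norm, small perturbations built from the $u_n(y_p)$ along sparse sets of positive lower density, and the small-perturbation/coordinate-functional bookkeeping --- is consistent with the argument the paper defers to.
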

The proof of the previous theorem follows the same lines as the proof of \cite[Theorem 3]{BonGro12}, but
replacing
$S^ny_j$ by $u_n(y_j)$, for each $y_j\in X_0$, in their key Lemma 1.
Next, we prove the existence of frequent hypercyclic subspaces for every non-trivial convolution operator, if
$dim(E)>1$. The corresponding problem for $dim(E)=1$ is open, up to our knowledge.

\begin{theorem}\label{FHS convolution}
Let $\u=\{\u_k\}_k$ be a holomorphy type and $E$ a Banach space with $dim(E)>1$ and separable dual such that
the finite type polynomials are dense in $\u_k(E)$ for every $k$. If $T:\hbu(E)\to\hbu(E)$ is a
non-trivial convolution operator, then $T$ has a frequently hypercyclic subspace.
\end{theorem}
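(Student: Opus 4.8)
The plan is to apply Theorem \ref{subspacio} with $X=\hbu(E)$ and the given operator $T$. Three conditions must then be checked: that $\hbu(E)$ is a separable F-space with a continuous norm, that $T$ satisfies the hypotheses of Proposition \ref{FHC modified}, and that $\dim\ker(T-\lambda)=\infty$ for some scalar $\lambda$ with $|\lambda|<1$. The first is essentially formal: $\hbu(E)$ is a Fr\'echet space; it is separable because $E'$ is separable and finite type polynomials are dense in each $\u_k(E)$, so the span of a countable family of exponentials $e^\gamma$ is dense (density of $\mathrm{span}\{e^\gamma:\gamma\in E'\}$ being exactly the injectivity of the Borel transform, Proposition \ref{beta_inyectiva}); and each seminorm $p_s$ with $s>0$ is already a norm, since $p_s(f)=0$ forces $d^kf(0)=0$ for every $k$ and hence $f=0$.

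For the hypotheses of Proposition \ref{FHC modified} I would proceed as in the proof that non-trivial convolution operators are strongly mixing. By Proposition \ref{convolution} we may write $T(f)=\varphi\ast f$ with $\varphi=\delta_0\circ T\in\hbu(E)'$, and for $\phi:=\beta(\varphi)\in\zH(E')$ one has $T(e^\gamma)=\phi(\gamma)\,e^\gamma$, the function $\phi$ is non-constant, and $V=\{\gamma\in E':|\phi(\gamma)|<1\}$ is a non-empty open set. The dense subspace $X_0$ should be assembled from the eigenvectors $e^\gamma$ with $\gamma\in V$, its density following from Lemma \ref{eigen_densos} with $B=\{z:|z|<1\}$ since $\phi(\gamma_0)$ is an accumulation point of $B$ for every $\gamma_0\in V$; then, for $x\in X_0$, the forward series $\sum_n T^nx$ is a finite combination of geometric series of ratio of modulus less than $1$ and hence converges absolutely in every $p_s$, i.e. unconditionally. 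The subtle point is to produce, for each $x\in X_0$, a sequence $(u_n(x))_n$ with $u_0(x)=x$, $Tu_n(x)=u_{n-1}(x)$ and $\sum_n u_n(x)$ unconditionally convergent: a single eigenvector with eigenvalue of modulus less than $1$ has no summable backward orbit, so one must instead work with the holomorphic eigenvector field $w\mapsto e^{\Gamma(w)}$ associated with a local holomorphic section $\Gamma$ of $\phi$ (so that $T e^{\Gamma(w)}=w\,e^{\Gamma(w)}$), take $X_0$ spanned by the $w$-Taylor coefficients $\psi_k$ of such fields (centered at suitable points of $V$ and, through a common point, along enough directions to keep the span dense, as in the proof of Lemma \ref{eigen_densos}), and exploit the relations $T\psi_k=c\,\psi_k+\psi_{k-1}$ together with the fast decay of the $\psi_k$ coming from holomorphy in $w$ to define the $u_n(x)$ and check the convergence requirements. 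I expect this verification of Proposition \ref{FHC modified} for an arbitrary non-trivial convolution operator to be the main obstacle, since it is the step where the bare eigenvalue computation no longer suffices and the eigenvector field must be controlled quantitatively.

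Finally, $\dim\ker(T-\lambda)=\infty$ for some $\lambda$ with $|\lambda|<1$ is where the hypothesis $\dim E>1$ is used. Since $\ker(T-\lambda)\supseteq\mathrm{span}\{e^\gamma:\phi(\gamma)=\lambda\}$ and exponentials $e^\gamma$ with distinct $\gamma$ are linearly independent, it suffices to find $\lambda$ with $|\lambda|<1$ whose fibre $\phi^{-1}(\lambda)$ is infinite. Pick $\gamma_1\in V$ and $\eta_1\in E'$ with $z\mapsto\phi(\gamma_1+z\eta_1)$ non-constant (such $\eta_1$ exists because $\phi$ is non-constant), complete $\eta_1$ to a linearly independent pair $\eta_1,\eta_2\in E'$ (possible since $\dim E'>1$), and restrict $\phi$ to the affine plane $\gamma_1+\C\eta_1+\C\eta_2$: this gives a non-constant holomorphic function of two complex variables whose level set through $\gamma_1$ is a non-empty analytic hypersurface of $\C^2$, hence infinite (e.g.\ by the Weierstrass preparation theorem it carries a one-dimensional branch through $\gamma_1$). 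Therefore $\phi^{-1}(\phi(\gamma_1))$ is infinite and $\dim\ker(T-\phi(\gamma_1))=\infty$ with $|\phi(\gamma_1)|<1$. With the three conditions verified, Theorem \ref{subspacio} yields the desired frequently hypercyclic subspace.
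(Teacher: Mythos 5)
Your reduction to Theorem \ref{subspacio} and your verification of the spectral hypothesis are fine; in fact your argument that $\dim\ker(T-\lambda)=\infty$ for some $|\lambda|<1$ (restrict $\beta(\varphi)$ to an affine two-dimensional plane through a point of $V=\{|\beta(\varphi)|<1\}$ and use that zero sets of non-constant holomorphic functions of two variables have no isolated points) is a correct and somewhat more direct route than the paper's, which instead argues that a finite zero set must be empty when $\dim E>1$ and then uses the exponential-type factorization $\beta(\varphi)=Ce^{p(\gamma)}$ with $p\in E''$ to see that every fibre over $\lambda\neq 0$ is infinite.

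The genuine gap is exactly where you flag it: the verification of the hypotheses of Proposition \ref{FHC modified}. Your sketched construction via a local holomorphic section $\Gamma$ of $\beta(\varphi)$ centered at a point of $V$, with $X_0$ spanned by the Taylor coefficients $\psi_k$ of $w\mapsto e^{\Gamma(w)}$ and the relations $T\psi_k=c\,\psi_k+\psi_{k-1}$, cannot deliver both summability conditions simultaneously. On the finite-dimensional $T$-invariant space $\mathrm{span}\{\psi_0,\dots,\psi_k\}$ the operator acts as $cI+N$ with $N$ nilpotent, so $\|T^n\psi_k\|\sim n^k|c|^n$, and the forward series (1) converges only when $|c|<1$; but then any backward chain built from this structure behaves like $(cI+N)^{-n}\psi_k$, whose norm grows like $|c|^{-n}$, so the series $\sum_n u_n(x)$ in (2) diverges. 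Symmetrically, $|c|>1$ kills (1). This is precisely why the literature, and the paper, do not work with holomorphic sections over a disk of eigenvalues of modulus $\neq 1$ but with \emph{unimodular} eigenvector fields: following the second proof of Theorem 1.3 in Bonilla--Grosse-Erdmann and Bayart--Grivaux, one constructs $C^2$ maps $C_k:\T\to\hbu(E)$ with $T(C_k(\lambda))=\lambda C_k(\lambda)$ whose values span densely (for any Borel set of full measure in $\T$), sets $x_{k,j}=\int_\T\lambda^jC_k(\lambda)\,d\lambda$ and $X_0=\mathrm{span}\{x_{k,j}\}$, and uses $T^nx_{k,j}=x_{k,j+n}$ together with $u_n(x_{k,j})=x_{k,j-n}$; the $C^2$ smoothness gives two-sided decay of these Fourier-type coefficients and hence the unconditional convergence of both series. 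Without this (or an equivalent device giving summable forward \emph{and} backward orbits), your proof of the frequent hypercyclicity hypotheses does not go through, so the proposal as written does not establish the theorem.
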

\begin{proof}
Let us see that both hypotheses of Theorem \ref{subspacio} are fulfilled by every non-trivial convolution operator on $\hbu(E)$.
Recall that if $T:\hbu(E) \to \hbu(E)$ is a non-trivial convolution operator then
$\beta(\varphi)(\gamma)=T(e^\gamma)(0)$ is holomorphic as a function of $\gamma\in E'$, and that
$T(e^\gamma)= [T(e^\gamma)(0)]e^\gamma$.
We have that $\{e^\gamma\, :\, \gamma\in E'\}$ is a linearly independent set in $\hbu(E)$, see \cite[Lemma 2.3]{AroBes99}. We will prove that there exists some
scalar $\lambda$ with $|\lambda|<1$ such that $\dim  Ker(T - \lambda) =\infty$. We follow the ideas of the proof of \cite[Theorem 5]{Pet06}.
If the set of zeros of $\beta(\varphi)$, denoted by $Z(\beta(\varphi))=\{\gamma\in E' :
\beta(\varphi)(\gamma)=0\}$, is infinite then we take $\lambda=0$, because
$Ker(T)\supset \{e^\gamma: \gamma\in Z(\beta(\varphi))\}$.
If $Z(\beta(\varphi))$ is not infinite, then it is empty since $\dim(E)>1$. Now, fix $\gamma\in E'$ and consider $f_\gamma(w)=\beta(\varphi)(w\gamma)$ for $w\in\zC$.
From the continuity of $T$ and of $\delta_0$, we get that there exist positive constants $M$ and $s$ such that
\begin{align*}
 |f_\gamma(w)|&=|T(e^{w\gamma})(0)| \leq M p_s(e^{w\gamma}) = M \sum_{k\geq 0}
\frac{s^k}{k!}\|d^k(e^{w\gamma})(0)\|_{\u_k}\\
 & = M \sum_{k\geq 0} \frac{s^k}{k!}\|w\gamma\|^k = M e^{s\|\gamma\||w|}.
\end{align*}
Thus, $f_\gamma:\zC\to\zC$ is a holomorphic function of exponential type without zeros. Then there exist
complex constants $C(\gamma)$ and $p(\gamma)$ such that
$f_\gamma(w)=C(\gamma)e^{p(\gamma)w}$.

Note that $C=C(\gamma)$ is independent of $\gamma$ because
$$
C(\gamma)=f_\gamma(0)=\beta(\varphi)(0)=T(1)(0).
$$

We also have that $f'_\gamma(0)=Cp(\gamma)=T(\gamma)(0)$. Thus we get that
$p(\gamma)=\frac{1}{C}T(\gamma)(0)$ is a linear continuous functional.
Finally, we get that $\beta(\varphi)(\gamma)=C e^{p(\gamma)}$ with $p\in E''$ and $C\neq 0$.
This implies that $Z(\beta(\varphi)-\lambda)$ is infinite for every $\lambda\neq 0$, as we wanted to prove.

To prove that $T$ satisfies the hypotheses of Proposition \ref{FHC modified} we follow the ideas of the second
proof of \cite[Theorem 1.3]{BonGro06}. Parameterizing the eigenvectors $e^\gamma$ it is possible to
construct a family of $C^2$-functions $C_k:\T\to \hbu(E)$ such that $T(C_k(\lambda))=\lambda C_k(\lambda)$ and
such that, for every Borel set of full Lebesgue measure, $B\subset\T$, the linear span of $\{C_k(\lambda):
\lambda\in B, \, k\in\N\}$ is dense in $\hbu(E)$. For $j\in\mathbb{Z}$ and $k\in\N$ set
$$
x_{k,j}=\int_\T \lambda^j C_k(\lambda) d\lambda,
$$
where the integral is in the sense of Riemann and $X_0=span\{x_{k,j}; j\in\mathbb{Z},\, k\in\N \}$. It follows from the proof of \cite[Th\'eor\`eme 2.2.]{BayGri04} that $X_0$ is dense in $\hbu(E)$ and that for $n\ge 0$, $j\in\mathbb{Z}$, $k\in\N$ we get
$$
T^nx_{k,j}=\int_\T \lambda^{j+n} C_k(\lambda) d\lambda.
$$
For every $y\in X_0$, there exists a linear combination $y=\sum_{l=1}^{m_y} a_l x_{k_l,j_l}$. So, we define
$$
u_n(y)=\sum_{l=1}^{m_y} a_l x_{k_l,j_l-n}.
$$
Finally, we have that $u_0(y)=y$ and that $T^iu_n(y)=u_{n-i}(y)$ if $i\leq n$, for every $y\in X_0$. Since
each $C_k$ is a $C^2$-function, by \cite[Lemma 9.23 (b)]{GroPer11}, we obtain that the series
$\sum_{n=1}^\infty T^n x_{k,j}$, $\sum_{n=1}^\infty u_n (x_{k,j})$ converge unconditionally for all
$j\in\mathbb{Z}$, $k\in\N$. As we claimed, $T$ satisfies the hypotheses of Proposition \ref{FHC modified}, and
so there exists a frequently hypercyclic subspace.
\end{proof}

\subsection{Translation Operators.} Suppose that $\tau_{z_0}:\hbu(E) \to \hbu(E)$ is the translation operator defined by $\tau_{z_0}(f)(z)=f(z+z_0)$. Next proposition is similar to \cite[Theorem 9.26]{GroPer11}, but in this case for translation operators in $\hbu(E)$, which gives sharp exponential growth conditions for frequently hypercyclic functions.

\begin{proposition}\label{crec traslacion}
Let $\u=\{\u_k\}_k$ be a holomorphy type and let $E$ be a Banach space with separable dual such that finite type polynomials are dense in $\u_k(E)$ for every $k$. Suppose that $\u$ is weakly differentiable with constants $c_{k,l}$ satisfying (\ref{constantes}). Let $\tau_{z_0}:\hbu(E) \to \hbu(E)$ be the translation operator by a non-zero vector $z_0\in E$. Then,
\begin{itemize}
\item[(a)] Given $\varepsilon>0$, then there exists $C>0$ and an entire function $f\in\hbu(E)$ which is frequently hypercyclic for $\tau_{z_0}$ and satisfies
$$
|f(z)|\leq Ce^{\varepsilon\|z\|}.
$$
\item[(b)] Let $\varepsilon:\R_+\rightarrow\R_+$ be a function such that $\liminf\limits_{r\rightarrow\infty}\varepsilon(r)=0$ and $C$ any positive number.
    Then there is no frequently hypercyclic entire function $f\in\hbu(E)$ for $\tau_{z_0}$, satisfying
$$
|f(z)|\leq Ce^{\varepsilon(\|z\|)\|z\|}, \text{ for all } z.
$$
\end{itemize}
\end{proposition}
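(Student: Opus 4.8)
The plan is to deduce (a) directly from Theorem~\ref{ultimo} and to prove (b) by restricting everything to the complex line $\C z_0$ and then running a zero-counting argument in the spirit of \cite[Theorem~9.26]{GroPer11}. For (a): for $z_0\neq0$ the operator $\tau_{z_0}$ is a non-trivial convolution operator (if $\tau_{z_0}=c\cdot\mathrm{id}$ then $e^{\gamma(z_0)}=c$ for every $\gamma\in E'$, which is impossible), and since $|\tau_{z_0}(e^\gamma)(0)|=|e^{\gamma(z_0)}|=e^{\operatorname{Re}\gamma(z_0)}$ equals $1$ at $\gamma=0$, we have $\alpha_{\tau_{z_0}}=0$. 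Applying Theorem~\ref{ultimo} with $\varepsilon/2$ in place of $\varepsilon$ produces a frequently hypercyclic $f\in Exp_{\u}^{\varepsilon/2}(E)$; as every element of $Exp_{\u}^{p}(E)$ satisfies $|f(z)|\le C_{\varepsilon'}e^{(p+\varepsilon')\|z\|}$ for each $\varepsilon'>0$, taking $p=\varepsilon'=\varepsilon/2$ gives $|f(z)|\le Ce^{\varepsilon\|z\|}$.

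For (b), suppose for contradiction that $f\in\hbu(E)$ is frequently hypercyclic for $\tau_{z_0}$ and $|f(z)|\le Ce^{\varepsilon(\|z\|)\|z\|}$ for all $z$. The first step is to push the problem down to one complex variable through $S\colon\hbu(E)\to\zH(\C)$, $S(h)(w)=h(wz_0)$. One checks that $S$ is continuous (the seminorm $p_s$ dominates $\sup_{\|z\|\le s}|h(z)|$, since $\|\cdot\|_{\p(^kE)}\le\|\cdot\|_{\u_k(E)}$ by the polynomial-ideal axioms), that $S$ has dense range (for $\gamma\in E'$ with $\gamma(z_0)\neq0$ it sends a scalar multiple of $\gamma^k$ to $w\mapsto w^k$), and that $S\circ\tau_{z_0}=\tau_1\circ S$, where $\tau_1$ is translation by $1$ on $\zH(\C)$. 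Being a continuous map with dense range that intertwines $\tau_{z_0}$ and $\tau_1$, $S$ carries frequently hypercyclic vectors to frequently hypercyclic vectors, so $g:=S(f)$ is frequently hypercyclic for $\tau_1$ on $\zH(\C)$ and satisfies $|g(w)|\le Ce^{\delta(|w|)|w|}$ with $\delta(s):=\|z_0\|\,\varepsilon(s\|z_0\|)$; in particular $\liminf_{s\to\infty}\delta(s)=0$. It therefore suffices to rule out a frequently hypercyclic vector $g$ for $\tau_1$ on $\zH(\C)$ with this growth (we may assume $g(0)\neq0$, replacing $g$ by $g/w^m$ otherwise, which does not increase $M(r,g):=\max_{|w|\le r}|g(w)|$).

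For this one-dimensional obstruction, let $V=\{h\in\zH(\C):\max_{|w|\le1}|h(w)-w|<\tfrac12\}$, which is open and nonempty, and note that by Rouché's theorem every $h\in V$ has a zero in the open unit disk. Since $g$ is frequently hypercyclic, $N:=\{n\ge0:\tau_1^ng\in V\}$ has positive lower density $\delta_0>0$, and for each $n\in N$ the function $g$ has a zero in $\{|w-n|<1\}$. Extracting from $N\cap[1,r]$ a subset with consecutive gaps $\ge3$ yields at least $c_0\delta_0 r$ pairwise disjoint such disks inside $\{|w|<r+1\}$, for an absolute constant $c_0>0$; hence the zero-counting function $n_g(t)=\#\{\text{zeros of }g\text{ in }|w|\le t\}$ satisfies $n_g(t)\ge c_0\delta_0 t$ for all large $t$. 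Jensen's formula then gives, for $r$ large,
\[
\log M(r,g)\ \ge\ \log|g(0)|+\int_0^{r}\frac{n_g(t)}{t}\,dt\ \ge\ \log|g(0)|+n_g(r/2)\log 2\ \ge\ \log|g(0)|+\tfrac12 c_0\delta_0 r\log 2,
\]
whence $\liminf_{r\to\infty}\frac{\log M(r,g)}{r}>0$. On the other hand, the growth hypothesis and the maximum principle imply, for every $\eta>0$ and every (arbitrarily large) $r$ with $\delta(r)<\eta$, that $M(r,g)=\max_{|w|=r}|g(w)|\le Ce^{\eta r}$, so $\liminf_{r\to\infty}\frac{\log M(r,g)}{r}\le\eta$; letting $\eta\to0$ contradicts the previous display, proving (b).

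The easy ingredients are the reduction (continuity and dense range of $S$, and preservation of frequent hypercyclicity under such maps) and the packaging of (a) into Theorem~\ref{ultimo}. The real obstacle is the one-dimensional statement: extracting from the bare frequent hypercyclicity of $g$ a positive-density set of its zeros — via the choice of $V$ and Rouché's theorem — and then converting this, through Jensen's formula, into a linear lower bound for $\log M(r,g)$ incompatible with $\liminf_{s\to\infty}\delta(s)=0$. This is precisely the mechanism of \cite[Theorem~9.26]{GroPer11}, and the careful bookkeeping there (the disjoint-disk extraction, the possible zero of $g$ at the origin) is where the actual work lies.
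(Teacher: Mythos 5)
Your proof is correct and takes essentially the same route as the paper: part (a) is deduced from Theorem \ref{ultimo} via $\alpha_{\tau_{z_0}}=0$, and part (b) restricts to the line $\C z_0$, conjugates $\tau_{z_0}$ to $\tau_1$ on $\zH(\C)$ through the (continuous, dense-range) restriction map and the comparison principle, exactly as in the paper. The only difference is that where the paper simply invokes \cite[Theorem 9.26]{GroPer11} for the one-dimensional obstruction, you reprove that statement (correctly) with the Rouch\'e/Jensen zero-density argument, which is the same mechanism underlying the cited result.
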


\begin{proof}

(a) Note that $\tau_{z_0}(e^\gamma)=e^{\gamma(z_0)}e^\gamma$, thus
$$\inf\{\|\gamma\|,\,\gamma\in E'\text{ such that } |\tau_{z_0}(e^\gamma)(0)|=1\}=0.$$
It follows from Theorem \ref{ultimo} that for any $\varepsilon >0$, there exist a frequently hypercyclic function $f \in \hbu(E)$ such that
$$
|f(z)|\leq Ce^{\varepsilon\|z\|},
$$
for some positive constant $C$.

(b) Suppose that there exist a frequently hypercyclic function $f$ for $\tau_{z_0}$ such that $|f(z)|\leq Ce^{\varepsilon(\|z\|)\|z\|}$. Consider the complex line $L=\{\lambda z_0, \lambda\in\C\}$ and the restriction map given by
\begin{align*}
\hbu(E)&\longrightarrow \zH(\C)\\
g&\mapsto g|_{L}(\lambda)=g(\lambda z_0).
\end{align*}
Consider the following diagram
$$
\xymatrix{\hbu(E) \ar[r]^{\tau_{z_0}} \ar[d] & \hbu(E) \ar[d] \\ \zH(\C) \ar[r]_{\tau_1} & \zH(\C)  }
$$

Note that is a commutative diagram, for $g\in\hbu(E)$
$$
(\tau_{z_0}g)|_L(\lambda)=\tau_{z_0}g(\lambda z_0)=g((\lambda+1)z_0)=(g|_L)(\lambda+1)=\tau_1(g|_L)(\lambda).
$$
Also the restriction map has dense range: take $\gamma\in E'$ such that $\gamma(z_0)=1$,
then $\gamma^k|_L(\lambda)=\gamma^k(\lambda z_0)=\lambda^k$.  Thus, all polynomials belong to the range of the
restriction map.

Applying the hypercyclic comparison principle we get that $\tau_1$ is frequent hypercyclic and that $f|_L\in \zH(\C)$
is a frequently hypercyclic function that satisfies
$$
|f|_L(z)|=|f(\lambda z_0)|\leq Ce^{\varepsilon(\|\lambda z_0\|)\|\lambda z_0\|}.
$$
But this bound contradicts \cite[Theorem 9.26]{GroPer11}, which states that there is no such a function in $\zH(\C)$.
\end{proof}

\begin{remark}\rm
As we mentioned in the proof of the last proposition, in \cite{BlaBonGro10,GroPer11} it is proved that, given $\varepsilon$ such that $\liminf\limits_{\lambda\rightarrow\infty}\varepsilon(|\lambda|)=0$, there are not frequently hypercyclic functions for the translation operator in $\mathcal H(\zC)$ satisfying that $|f(\lambda)|\leq Ce^{\varepsilon(|\lambda|)|\lambda|}$. In contrast, there are hypercyclic functions of arbitrary slow growth (see \cite{Duy83}). The corresponding result in the Banach space setting has not been studied, up to our knowledge.
\end{remark}

\subsection{Differentiation Operators.} For the differentiation operator on $\hbu(E)$, $\mathcal{D}_a:\hbu(E)\to \hbu(E), \;\mathcal{D}_a(f)=d^1f(\cdot)(a),$ we can estimate the exponential type for the frequent hypercyclic functions. Since
$\mathcal{D}_a(e^\gamma)(0)=d^1(e^\gamma)(0)(a)=\gamma(a)$, we get that
$$
\inf\{\|\gamma\|,\,\gamma\in E' \text{ such that } |\mathcal{D}_a(e^\gamma)(0)|=1\}=\|a\|.
$$

Thus given $\varepsilon>0$ there exist a  frequently hypercyclic function $f$ such that
$$
|f(x)|\leq Ce^{\left(\|a\|+\varepsilon\right)\|x\|},
$$
for some $C>0$. It is not difficult to see that the best exponential type of a hypercyclic function for $\mathcal D_a$ is $\|a\|$. To prove this fact it suffices to conjugate $\mathcal D_a$ by the one dimensional differentiation operator (as we did in the proof of Proposition \ref{crec traslacion}) and apply \cite[Theorem 4.22]{GroPer11} (see also \cite{Gro90} and \cite{Shk93}).

\medskipç

{\bf Acknowledgements.} The authors would like to thanks to Professor A. Peris for his useful comments and to the anonymous referee for his/her carefully reading and suggestions which led to an improvement of the last section of this manuscript.

\bibliography{biblio}

\begin{thebibliography}{10}

\bibitem{Aro79}
Richard~M. Aron.
\newblock Weakly uniformly continuous and weakly sequentially continuous entire
  functions.
\newblock In {\em Advances in holomorphy ({P}roc. {S}em. {U}niv. {F}ed. {R}io
  de {J}aneiro, {R}io de {J}aneiro, 1977)}, pages 47--66. North--Holland Math.
  Studies, 34. North-Holland, Amsterdam, 1979.

\bibitem{AroBes99}
Richard~M. Aron and Juan B{\'e}s.
\newblock Hypercyclic differentiation operators.
\newblock In {\em Function spaces ({E}dwardsville, {IL}, 1998)}, volume 232 of
  {\em Contemp. Math.}, pages 39--46. Amer. Math. Soc., Providence, RI, 1999.

\bibitem{BayGri04}
Fr\'ed\'eric Bayart and Sophie Grivaux.
\newblock {Hypercyclicity: the role of the unimodular point spectrum.
  (Hypercyclicit\'e : Le r\^ole du spectre ponctuel unimodulaire.)}.
\newblock {\em C. R., Math., Acad. Sci. Paris}, 338(9):703--708, 2004.

\bibitem{BayMatSMALL}
Fr\'ed\'eric Bayart and \'Etienne Matheron.
\newblock Mixing operators and small subsets of the circle.
\newblock {\em J. Reine Angew. Math.}, to appear.

\bibitem{BerBotFavJat13}
Fabio Bertoloto, Geraldo Botelho, Vin\'{\i }cius~V. F\'avaro, and Ariosvaldo
  Jatob\'a.
\newblock Hypercyclicity of convolution operators on spaces of entire
  functions.
\newblock {\em Ann. Inst. Fourier}, 63(4):1263--1283, 2013.

\bibitem{BlaBonGro10}
Oscar Blasco, Arturo Bonilla, and Karl-G. Grosse-Erdmann.
\newblock Rate of growth of frequently hypercyclic functions.
\newblock {\em Proc. Edinb. Math. Soc. (2)}, 53(1):39--59, 2010.

\bibitem{BonGro06}
Arturo Bonilla and Karl-G. Grosse-Erdmann.
\newblock On a theorem of {G}odefroy and {S}hapiro.
\newblock {\em Integral Equations Operator Theory}, 56(2):151--162, 2006.

\bibitem{BonGro12}
Arturo Bonilla and Karl-G. Grosse-Erdmann.
\newblock Frequently hypercyclic subspaces.
\newblock {\em Monatsh. Math.}, 168(3-4):305--320, 2012.

\bibitem{Car01}
Daniel Carando.
\newblock Extendibility of polynomials and analytic functions on {$l\sb p$}.
\newblock {\em Studia Math.}, 145(1):63--73, 2001.

\bibitem{CarDimMur07}
Daniel Carando, Ver{\'o}nica Dimant, and Santiago Muro.
\newblock Hypercyclic convolution operators on {F}r\'echet spaces of analytic
  functions.
\newblock {\em J. Math. Anal. Appl.}, 336(2):1324--1340, 2007.

\bibitem{CarDimMurCOLL}
Daniel Carando, Ver\'onica Dimant, and Santiago Muro.
\newblock {Holomorphic Functions and polynomial ideals on Banach spaces}.
\newblock {\em {Collect. Math.}}, 63(1):71--91, 2012.

\bibitem{CarGal11(radon-nikodym)}
Daniel Carando and Daniel Galicer.
\newblock {The symmetric Radon-Nikodym property for tensor norms}.
\newblock {\em {J. Math. Anal. Appl.}}, 375(2):553--565, 2011.

\bibitem{DimGalMaeZal04}
Ver{\'o}nica Dimant, Pablo Galindo, Manuel Maestre, and Ignacio Zalduendo.
\newblock Integral holomorphic functions.
\newblock {\em Studia Math.}, 160(1):83--99, 2004.

\bibitem{Din71(holomorphy-types)}
Se\'an Dineen.
\newblock {Holomorphy types on a Banach space.}
\newblock {\em Studia Math.}, 39:241--288, 1971.

\bibitem{Din99}
Se\'{a}n Dineen.
\newblock {\em {Complex analysis on infinite dimensional spaces.}}
\newblock {Springer Monographs in Mathematics. London: Springer}, 1999.

\bibitem{Duy83}
Sarah~M. Duyos-Ruiz.
\newblock On the existence of universal functions.
\newblock {\em Dokl. Akad. Nauk SSSR}, 268(1):18--22, 1983.

\bibitem{Dwy71}
Thomas~A.W.III Dwyer.
\newblock {Partial differential equations in Fischer-Fock spaces for the
  Hilbert- Schmidt holomorphy type.}
\newblock {\em Bull. Am. Math. Soc.}, 77:725--730, 1971.

\bibitem{FavJat09}
Vin\'{\i }cius~V. F\'{a}varo and Ariosvaldo~M. Jatob\'{a}.
\newblock {Holomorphy types and spaces of entire functions of bounded type on
  Banach spaces.}
\newblock {\em Czech. Math. J.}, 59(4):909--927, 2009.

\bibitem{Flo01}
Klaus Floret.
\newblock {Minimal ideals of $n$-homogeneous polynomials on Banach spaces.}
\newblock {\em Result. Math.}, 39(3-4):201--217, 2001.

\bibitem{Flo02}
Klaus Floret.
\newblock On ideals of {$n$}-homogeneous polynomials on {B}anach spaces.
\newblock In {\em Topological algebras with applications to differential
  geometry and mathematical physics ({A}thens, 1999)}, pages 19--38. Univ.
  Athens, Athens, 2002.

\bibitem{GodSha91}
Gilles Godefroy and Joel~H. Shapiro.
\newblock Operators with dense, invariant, cyclic vector manifolds.
\newblock {\em J. Funct. Anal.}, 98(2):229--269, 1991.

\bibitem{Gro90}
Karl-G. Grosse-Erdmann.
\newblock On the universal functions of {G}. {R}. {M}ac{L}ane.
\newblock {\em Complex Variables Theory Appl.}, 15(3):193--196, 1990.

\bibitem{GroPer11}
Karl-G. Grosse-Erdmann and Alfred Peris~Manguillot.
\newblock {\em {Linear chaos.}}
\newblock {Universitext. Berlin: Springer. xii, 386~p.}, 2011.

\bibitem{Gup70}
Chaitan~P. Gupta.
\newblock On the {M}algrange theorem for nuclearly entire functions of bounded
  type on a {B}anach space.
\newblock {\em Nederl. Akad. Wetensch. Proc. Ser. A73 = Indag. Math.},
  32:356--358, 1970.

\bibitem{Mur12}
Santiago Muro.
\newblock On algebras of holomorphic functions of a given type.
\newblock {\em J. Math. Anal. Appl.}, 389(2):792--811, 2012.

\bibitem{Nac69}
Leopoldo Nachbin.
\newblock {\em Topology on spaces of holomorphic mappings}.
\newblock Ergebnisse der Mathematik und ihrer Grenzgebiete, Band 47.
  Springer-Verlag New York Inc., New York, 1969.

\bibitem{Pet01}
Henrik Petersson.
\newblock Hypercyclic convolution operators on entire functions of
  {H}ilbert-{S}chmidt holomorphy type.
\newblock {\em Ann. Math. Blaise Pascal}, 8(2):107--114, 2001.

\bibitem{Pet06}
Henrik Petersson.
\newblock Hypercyclic subspaces for {F}r\'echet space operators.
\newblock {\em J. Math. Anal. Appl.}, 319(2):764--782, 2006.

\bibitem{Shk93}
Stanislav~A. Shkarin.
\newblock On the growth of {$D$}-universal functions.
\newblock {\em Vestnik Moskov. Univ. Ser. I Mat. Mekh.}, (6):80--83 (1994),
  1993.

\end{thebibliography}
\bibliographystyle{plain}

\end{document}